\numberwithin{equation}{section}
\newtheorem{theorem}{Theorem}[section]
\newtheorem{lemma}{Lemma}[section]
\newtheorem{remark}{Remark}[section]
\newtheorem{proof}{Proof}
\theoremstyle{nonumberplain}
\begin{document}
	\title{\bf Local well-posedness for the Zakharov system in dimension $d=2, 3$} 
	\date{}
	\author{\sffamily Zijun Chen$^1$, Shengkun Wu$^{2}$\\
		{\sffamily\small $^1$ School of Mathematical Sciences, Monash University, Melbourne, VIC 3800, Australia}\\
		{\sffamily\small $^2$ College of Mathematics and Statistics, Chongqing University, Chongqing, 401331, PR }}
	\renewcommand{\thefootnote}{\fnsymbol{footnote}}
	\footnotetext[1]{Corresponding author: Zijun Chen, Email: zijun.chen@monash.edu}
	\footnotetext[1]{Shengkun Wu, Email: shengkunwu@foxmail.com }
	\maketitle
	
	{\noindent\small{\bf Abstract:}
		The Zakharov system in dimension $d=2,3$ is shown to have a local unique solution for any initial values in the space $H^{s} \times H^{l} \times H^{l-1}$, where a new range of regularity $(s, l)$ is given, especially at the line $s-l=-1$. The result is obtained mainly by the normal form reduction and the Strichartz estimates.}
	
	\vspace{1ex}
	{\noindent\small{\bf Keywords:}
		Zakharov; local well-posedness; normal form reduction; strichartz estimate}

\section{Introduction}
The initial-value problem for the Zakharov system is:
\begin{equation}\label{1.1}
	\left\{\begin{array}{l}
		i \dot{u}-\Delta u=n u,\\
		\ddot{n} / \alpha^{2}-\Delta n=-\Delta|u|^{2}, \\
		u(0, x)=u_{0}(x), n(0, x)=n_{0}(x), \dot{n}(0, x)=n_{1}(x).
	\end{array}\right.
\end{equation}
Here, $ u(t, x):  \mathbb{R} \times \mathbb{R}^{d} \longmapsto \mathbb{C}, n(t, x): \mathbb{R} \times \mathbb{R}^{d} \longmapsto \mathbb{R}.  $ $\alpha>0$ is called the ion sound speed. This problem arises in plasma physics.  Sufficiently regular solutions of $\eqref{1.1}$ satisfy the conservation of mass
\begin{equation}
	\int_{\mathbb{R}^{d}}|u|^{2} d x=\int_{\mathbb{R}^{d}}\left|u_{0}\right|^{2} d x
\end{equation}
and conservation of the Hamiltonian
\begin{equation}
	E=\int_{\mathbb{R}^{d}}|\nabla u|^{2}+\frac{\left|D^{-1} \dot{n}\right|^{2} / \alpha^{2}+|n|^{2}}{2}-n|u|^{2} d x,
\end{equation}
where $D:=\sqrt{-\Delta}$. The Zakharov system is typically studied as a Cauchy problem by prescribing initial data in the space
\begin{equation}
	\left(u_{0}, n_{0}, n_{1}\right) \in H^{s}\left(\mathbb{R}^{d}\right) \times H^{l}\left(\mathbb{R}^{d}\right) \times H^{l-1}\left(\mathbb{R}^{d}\right).
\end{equation}
In terms of $N:=n-i D^{-1} \dot{n} / \alpha$, we can change the system $\eqref{1.1}$ into first order as usual:
\begin{equation}\label{equivalent1.1}
	\left\{\begin{array}{l}
		\left(i \partial_{t}-\Delta\right) u=N u / 2+\bar{N} u / 2, \\
		\left(i \partial_{t}+\alpha D\right) N=\alpha D|u|^{2},\\
		u(0, x)=u_{0}(x), N(0, x)=N_{0}(x),
	\end{array}\right.
\end{equation}
where $\left(u_{0}, N_{0}\right) \in H^{s}\left(\mathbb{R}^{d}\right) \times H^{l}\left(\mathbb{R}^{d}\right)$.
Since the term $\bar{N} u$ makes no essential difference from $N u$, from here on out, we will replace the nonlinear term $N u / 2+\bar{N} u / 2$ with $Nu$. 

The system $\eqref{1.1}$ was introduced by Zakharov \cite{Zakharov1972} to depict the Langmuir turbulence in unmagnetized ionized plasma. By using Bourgain's $X^{s, b}$ space \cite{Bourgain1996}, Ginibre, Tsutsumi, and Velo \cite{Ginibre1997} proved the best known result of the local well-posedness (LWP) for the Zakharov system in some regular spaces $H^{s} \times H^{l} \times H^{l-1}$ with various $s, l$ in arbitrary space dimensions. Further well-posedness results were obtained in \cite{Pecher2005, Colliander2008} if $d=1$; in  \cite{Fang2009, Bejenaru2009} if $d=2$; in \cite{Bejenaru2011, Bejenaru2013}  if $d=3$; and in \cite{Bejenaru2015, Kato2017, Candy2020} if $d \geq 4$.

Normal form reduction is widely used to deal with the difficulty posed by the so-called “derivative loss” during the study of the Zakharov system. Employing this method in \cite{Guo2014}, Guo and Nakanishi  proved small energy scattering for the Zakharov system in $d=3$. Then, in \cite{GuoLee2014}, the generalized Strichartz estimate was used to obtain improvements. Moreover, in \cite{GuoNakanishi2014}, global dynamics below ground state energy were considered for the Klein-Gordon-Zakharov system. Recently, for $d=4$, Bejenaru, Guo, Herr, and Nakanishi \cite{Bejenaru2015} proved small data global well-posedness and scattering in a range of $(s, l)$. Additionally, the large data threshold result in \cite{GuoNakanishi2018} is shown to be restricted to radial data. 

In this article, we are particularly interested in the low regularity local well-posedness for the Zakharov system $\eqref{1.1}$ in $d=2, 3$ with $\left(u_{0}, n_{0}, n_{1}\right) \in H^{s} \times H^{l} \times H^{l-1}$. We will combine the normal form reduction with multilinear estimates, which easily follow from Littlewood-Paley decomposition, Coifman Meyer bilinear estimate, Strichartz estimates, and Sobolev inequalities. 

Using these multilinear estimates and the standard contraction mapping principle, we obtain two main results as follows:

\begin{theorem}\label{Theorem1}
	The Cauchy problem $\eqref{1.1}$ is locally well-posed in $ H^{s}\left(\mathbb{R}^{2}\right) \times H^{l}\left(\mathbb{R}^{2}\right)\\ \times H^{l-1}\left(\mathbb{R}^{2}\right)$, provided that
	\begin{equation}\label{result1}
		l \geq 0, \quad \max\left\{\frac{l+1}{2}, \ l-1\right\} \leq s \leq l+\frac{3}{2}.
	\end{equation} 
\end{theorem}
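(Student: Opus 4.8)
The plan is to run a contraction mapping argument on the first-order system \eqref{equivalent1.1}, after a normal form transformation that removes the derivative- and regularity-losing quadratic interactions. Writing the Duhamel formulas for $u$ and $N$ and passing to the interaction picture (conjugating by the free Schr\"odinger and half-wave propagators), the two quadratic terms carry the oscillatory phases $\Phi_{N}(\xi_{1},\xi_{2})=\alpha|\xi_{1}-\xi_{2}|-|\xi_{1}|^{2}+|\xi_{2}|^{2}$ for $\alpha D|u|^{2}$ in the $N$-equation, and $\Phi_{u}(\xi,\eta)=-|\xi|^{2}+\alpha|\eta|+|\xi-\eta|^{2}$ for $Nu$ in the $u$-equation. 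I would decompose each bilinear interaction with Littlewood--Paley projections in the input and output frequencies, and split it into a non-resonant part, where $|\Phi|$ is large compared with the relevant frequency scale, and a resonant part where it is not. On the non-resonant part, integrate by parts in time; this produces (i) bilinear boundary terms at time $t$ with multipliers $\alpha|\xi_{1}-\xi_{2}|/\Phi_{N}$ and $1/\Phi_{u}$, hence of order $\le 0$ — so the full derivative in $\alpha D|u|^{2}$ is absorbed; (ii) the same bilinear expressions evaluated at time $0$, depending only on the data; and (iii) trilinear terms obtained by letting $\partial_{t}$ fall on the profiles and substituting the equations back in. The resonant part is left as it stands. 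Carried out once for each quadratic interaction, this yields a reformulated integral system equivalent to \eqref{equivalent1.1}.

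For the iteration I would fix a small $T>0$ and work in a closed ball of $X^{s}_{T}\times Y^{l}_{T}$, where $X^{s}_{T}=C([0,T];H^{s})$ intersected with a Strichartz norm for $e^{-it\Delta}$ (built on $L^{4}_{t,x}$ when $d=2$), and $Y^{l}_{T}=C([0,T];H^{l})$ intersected with a Strichartz norm for $e^{it\alpha D}$ (on a wave-admissible pair such as $L^{4}_{t}L^{\infty}_{x}$ with the matching derivative weight). The solution map $\mathcal{T}$ is the right-hand side of the normal-form system: its linear part is the free evolution of the modified data, and its nonlinear part is the sum of the bilinear boundary terms at time $t$, the resonant bilinear terms, and the trilinear terms. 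Everything then reduces to a finite list of multilinear estimates, each carrying a positive power of $T$, which supplies the contraction factor.

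These estimates are all of paraproduct-plus-Strichartz type. For the bilinear boundary terms I would split into low--high, high--low and high--high interactions; in each regime the smooth symbols $\Phi^{-1}$ (restricted away from the resonant set) are handled by the Coifman--Meyer bilinear multiplier theorem, and the resulting products by Bernstein and Sobolev embeddings on $\mathbb{R}^{2}$ (such as $H^{1/2}\hookrightarrow L^{4}$), followed by summation of the dyadic pieces; requiring these sums to converge — the worst cases being high--high-to-low and low--high interactions — is what produces $l\ge 0$, $s\le l+\tfrac{3}{2}$, and $s\ge l-1$. The resonant bilinear term is the delicate one: on $\{|\Phi_{N}|\ll\alpha|\xi_{1}-\xi_{2}|\}$ (resp.\ $\{|\Phi_{u}|\lesssim 1\}$) the two input frequencies lie, for each fixed output frequency, in a thin slab, and combining this localization with the $L^{4}_{t,x}$ (bilinear) Strichartz estimate recovers precisely the missing derivative; this is what accounts for the lower bound $s\ge\tfrac{l+1}{2}$. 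The trilinear terms no longer lose derivatives, so they close by H\"older in time (the source of the $T$-power), Strichartz, and Sobolev inequalities. With these in hand, $\mathcal{T}$ maps a small ball into itself and is a contraction there; the fixed point is the local solution, uniqueness follows from the same estimates, continuous dependence on the data from the standard variant, and one checks that the normal-form system is equivalent to \eqref{1.1}, so that the solution lies in $C([0,T];H^{s}\times H^{l}\times H^{l-1})$.

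I expect the main obstacle to be the sharp derivative bookkeeping in the resonant bilinear estimate and at the corners of the region \eqref{result1} — in particular along the new line $s-l=-1$ and along $s=\tfrac{l+1}{2}$ — where the dyadic sums are only borderline convergent and a crude Leibniz/Sobolev product rule does not suffice; this is exactly what forces the use of the Coifman--Meyer estimate and of the slab (transversality) structure of the resonant set, in place of $X^{s,b}$ machinery. A secondary technical point is tuning the $d=2$ half-wave Strichartz exponents so that their derivative weights are compatible with the regularity $l$ of $N$ while still leaving a positive power of $T$ available.
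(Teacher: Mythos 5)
Your overall architecture --- first-order reformulation, normal form (integration by parts in time) on the non-resonant quadratic interactions, then a contraction argument built on Coifman--Meyer, Bernstein/Sobolev and Strichartz estimates --- is the same as the paper's. But two steps, as you describe them, have genuine gaps.

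First, the boundary terms. You assert that \emph{every} nonlinear term in the reformulated system, including ``the bilinear boundary terms at time $t$,'' carries a positive power of $T$. It does not: $\Omega(N,u)(t)$ and $D\tilde\Omega(u,u)(t)$ are evaluated pointwise in time, so the best you get is $\|\Omega(N,u)\|_{L^\infty_t H^s}\lesssim\|N\|_{L^\infty_t H^l}\|u\|_{L^\infty_t H^s}$ with a constant independent of $T$, and shrinking $T$ gives no contraction factor for these terms. This is the standard obstruction in normal-form arguments, and your phase-size splitting alone offers no parameter to beat it. The paper's fix is to perform the normal form only on the high--low piece whose \emph{output} frequency exceeds a threshold $2^{\beta}$ (the $XL/LX$ parts); the boundary terms then come with a factor $2^{-\beta\theta}$ ($\theta>0$ inside the region \eqref{result1}), and $\beta$ is chosen large depending on the size of the data, independently of $T$. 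The leftover high--low piece with output frequency $\le 2^\beta$ is absorbed into the directly-estimated quadratic terms at the cost of a constant $C(\beta)$, which is then killed by $T$. You need some version of this two-parameter ($\beta$ then $T$) scheme, or a small-data hypothesis, to close the contraction.

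Second, the resonant quadratic term in the wave equation. You attribute the constraint $s\ge\tfrac{l+1}{2}$ to a slab-localization/transversality argument combined with bilinear $L^4_{t,x}$ Strichartz. Inside the purely Strichartz-based iteration space you set up (no $X^{s,b}$ or $U^p$--$V^p$ structure), the thin-slab localization of the resonant set gives no gain for general iterates --- exploiting it requires duality against free evolutions or bounded-variation spaces, as in the Bejenaru--Herr--Holmer--Tataru line of work. As written, this step would not close. Fortunately it is also unnecessary for the claimed range: a direct H\"older--Bernstein estimate on the high--high part of $D(u\bar u)$ (the output carries $l+1$ derivatives split over two comparable input frequencies at regularity $s$) already yields exactly $2s\ge l+1$, which is how the paper obtains this constraint (Lemma \ref{Quadratic terms d=2}(2)); the whole $HH$, $LH$ and low-output high--low interactions are simply estimated directly, with no resonance analysis. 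If you replace your resonant-set argument by that direct product estimate and add the $\beta$-threshold mechanism above, your outline matches the paper's proof.
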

	\begin{theorem}\label{Theorem2}
The Cauchy problem $\eqref{1.1}$ is locally well-posed in $ H^{s}\left(\mathbb{R}^{3}\right) \times H^{l}\left(\mathbb{R}^{3}\right) \\ \times H^{l-1}\left(\mathbb{R}^{3}\right)$, provided that
		\begin{equation}\label{result2}
			l \geq 0, \quad \max\left\{\frac{l+1}{2}, \ l-1\right\} \leq s \leq l+\frac{5}{4}.
		\end{equation} 
	\end{theorem}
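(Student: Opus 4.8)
\medskip
\noindent\textbf{Proof strategy.} The plan is to run a contraction-mapping argument for the Duhamel formulation of the first-order system \eqref{equivalent1.1}, \emph{after} a bilinear normal form transformation; we describe this for $d=3$ (Theorem~\ref{Theorem2}), the case $d=2$ being parallel with the $\mathbb{R}^2$ Strichartz exponents in place of the $\mathbb{R}^3$ ones. First I would fix $T\in(0,1]$ small and set up the iteration space: a Strichartz-type space $Z^s_T$ at regularity $s$ adapted to the Schr\"odinger group (built from $C([0,T];H^s(\mathbb{R}^3))$ and dyadically frequency-localized norms $L^p_tL^q_x$ on $[0,T]\times\mathbb{R}^3$ with $\tfrac2p+\tfrac3q=\tfrac32$), together with a space $W^l_T$ at regularity $l$ adapted to the half-wave group $e^{it\alpha D}$ (built similarly from the $\mathbb{R}^3$ wave-admissible pairs, which already costs a derivative loss since $L^2_tL^\infty_x$ is excluded). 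The goal is to show that, for data in a fixed ball of $H^s\times H^l$, the transformed Duhamel map is a contraction on a ball of $Z^s_T\times W^l_T$ once $T$ is small; existence, uniqueness in that class, and continuous dependence then follow in the standard way.

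\medskip
A naive iteration fails throughout \eqref{result2} because of the ``derivative loss'': the term $Nu$ in the Schr\"odinger equation cannot be placed in $Z^s_T$ by the available bilinear Strichartz estimates. To repair this I would perform a normal form reduction on the Schr\"odinger Duhamel integral. Decompose dyadically and split the interaction $Nu$ according to the size of the resonance function
\[
\Phi(\xi,\xi_1)=|\xi|^2-|\xi_1|^2-\alpha|\xi-\xi_1|=2\,\xi_1\cdot\xi_2+|\xi_2|^2-\alpha|\xi_2|,\qquad \xi_2:=\xi-\xi_1.
\]
On the non-resonant part $\{|\Phi|\gtrsim M\}$, integrate by parts in $\tau$ in $\int_0^t e^{-i(t-\tau)\Delta}(Nu)\,d\tau$: since the $\tau$-derivative of each free profile is precisely the nonlinear forcing of its own equation ($D|u|^2$ for $N$, $Nu$ for $u$), this yields a boundary bilinear term $\mathcal B_0(N,u)\big|_0^t$, a cubic term $\sim\int_0^t e^{-i(t-\tau)\Delta}\mathcal B_1\bigl(D|u|^2,u\bigr)\,d\tau$, and a cubic term $\sim\int_0^t e^{-i(t-\tau)\Delta}\mathcal B_2(N,Nu)\,d\tau$, in which every bilinear pseudo-product $\mathcal B_j$ carries a symbol equal to $\Phi^{-1}$ times smooth frequency cut-offs (of size $\lesssim M^{-1}$ on each dyadic block); the resonant part $\{|\Phi|\lesssim M\}$ is retained unchanged. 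This is realized concretely as the change of unknown $v:=u-\mathcal B_0(N,u)$. A similar reduction may be applied to the wave equation; alternatively, in $d=3$ the term $\alpha D|u|^2$ is handled directly using the bilinear refinement of the Schr\"odinger Strichartz estimate and the transversality of the paraboloid and the cone.

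\medskip
It then remains to prove, and sum over the Littlewood--Paley pieces, a short list of multilinear estimates: (i) $\|\mathcal B_0(N,u)\|_{Z^s_T}\lesssim\|N\|_{W^l_T}\|u\|_{Z^s_T}$, which makes $v\leftrightarrow u$ a bi-Lipschitz correspondence on the iteration ball; (ii) trilinear bounds into $Z^s_T$ for the two cubic terms, in which the factor $\Phi^{-1}$ on each non-resonant block absorbs the derivative in $D|u|^2$ and the high wave frequency in $\mathcal B_2$; (iii) a bilinear bound into $Z^s_T$ for the retained resonant term $\int_0^t e^{-i(t-\tau)\Delta}P_{|\Phi|\lesssim M}(Nu)\,d\tau$; and (iv) a bilinear bound $\bigl\|\int_0^t e^{i(t-\tau)\alpha D}(\alpha D|u|^2)\,d\tau\bigr\|_{W^l_T}\lesssim\|u\|_{Z^s_T}^2$. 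On a fixed-frequency block each of these reduces — via the Coifman--Meyer multiplier theorem for the $\Phi^{-1}$-type symbols, the Schr\"odinger and half-wave Strichartz estimates together with their bilinear refinements to convert free evolutions into $L^p_tL^q_x$, H\"older in time, and Sobolev embedding — to an elementary numerical inequality, while the time-H\"older step supplies a positive power of $T$ and hence the contraction. Book-keeping the $\mathbb{R}^3$ Strichartz exponents against the frequency powers gained from the bilinear refinements is exactly what carves out \eqref{result2}; the upper bound $s\le l+\tfrac54$ (in place of $l+\tfrac32$ when $d=2$), the lower bounds $s\ge\frac{l+1}{2}$ and $s\ge l-1$, and in particular the corner on the line $s-l=-1$, are the extreme cases in which these estimates are sharp.

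\medskip
I expect the main obstacle to be estimate (iii): on the resonant set the normal form gives no smoothing, so that bilinear term has to be controlled in its own right. The mechanism is that $|\Phi|\lesssim M$ forces a geometric alternative — either $2\,\xi_1\cdot\xi_2\approx\alpha|\xi_2|-|\xi_2|^2$, i.e.\ near-transversality of the Schr\"odinger and wave frequency surfaces, or smallness of the wave frequency $|\xi_2|$ — and a bilinear Strichartz estimate for the product of a Schr\"odinger free wave with a half-wave free wave recovers precisely the gain needed; choosing the threshold $M$ (in general frequency-dependent) so as to balance the loss in (iii) against those in (i) and (ii) is what pins down the endpoints of \eqref{result2}. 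A secondary difficulty, sharper for $d=3$ than for $d=2$, is that the half-wave Strichartz estimates on $\mathbb{R}^3$ are lossy — there is no useful $L^2_tL^\infty_x$ bound — so the derivative in $\alpha D|u|^2$ and the high wave frequency in $\mathcal B_2(N,Nu)$ must be absorbed through the low-over-high frequency gains of the bilinear refinements rather than through a crude product estimate; this is what forces the more restrictive upper bound $s\le l+\tfrac54$.
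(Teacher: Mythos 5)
Your overall architecture --- normal form reduction on the non-resonant part of $Nu$, retention of the resonant part, and a contraction argument in Schr\"odinger/wave Strichartz spaces with smallness coming partly from a power of $T$ and partly from pushing the normal form to high frequencies --- is exactly the paper's strategy, and the boundary, cubic and retained bilinear terms you list in (i)--(iv) match the right-hand sides of \eqref{u} and \eqref{N}. The two routes diverge in the tools for the multilinear estimates. You split by the size of the resonance function $\Phi$ and propose to control the retained resonant block and $\alpha D|u|^2$ via bilinear refinements of Strichartz estimates exploiting transversality of the paraboloid and the cone; the paper instead uses a purely frequency-based proxy for non-resonance (the $XL$ region: wave frequency high, Schr\"odinger input low, output above a threshold $\beta$, where the denominator is dominated by $-|\xi|^2$), and then estimates the retained terms $(Nu)_{LH+HH+\alpha L}$ and $D(u\bar u)_{HH+\alpha L+L\alpha}$ with nothing more than H\"older in time, the \emph{linear} Strichartz estimates of Lemma \ref{strichartz}, Bernstein, and Coifman--Meyer symbol bounds (Lemmas \ref{Quadratic terms}--\ref{Cubic terms}); no bilinear Fourier-restriction input is needed anywhere in the range \eqref{result2}. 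Your approach is therefore heavier than necessary here but is the one that would be required to go below $l=0$ or toward the corner $(s,l)=(0,-1/2)$; the paper's buys simplicity at the cost of stopping at $l\ge 0$. Two points in your sketch need care: first, the bound $\|\mathcal B_0(N,u)\|_{Z^s_T}\lesssim\|N\|\,\|u\|$ alone does not make $u\mapsto u-\mathcal B_0(N,u)$ invertible for large data --- you must extract an extra small factor from the high-frequency restriction (the paper's $2^{-\beta\theta}$ with $\beta=\beta(\eta)$; your frequency-dependent threshold $M$ plays the same role, so make that quantitative); second, in the paper the constraint $s\le l+5/4$ arises from the Schr\"odinger boundary term $\Omega(N,u)$ measured in $L^{8/3}_tB^s_4$ (Lemma \ref{Boundary terms}(2)), i.e.\ from the admissible Schr\"odinger exponents, not from the lossiness of the $3$d wave Strichartz estimates as you suggest, while $2s\ge l+1$ and $l\le s+1$ come from the quadratic and cubic wave-side terms respectively.
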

	
	\begin{remark}
	(1) To the best of our knowledge, the latest local well-posedness result for $d=2$ is showed in \cite{Bejenaru2009}, which shows LWP to be in a corner at $L^{2} \times H^{-1 / 2} \times H^{-3 / 2}$ and with large initial data. This is the lowest regularity at which one expects to prove LWP via contraction mapping. In \cite{Bourgain1996}, our result covers the range $l \geq 0,2 s \geq l+1, l \leq s \leq l+1$. By using normal form reduction, we are unable to go below or towards the left of the lowest point $(s, l)=\left(0,-1/2\right)$, but we can improve upon the difference in the $s$ and $l$ regularities. More precisely, our result improves upon the constraint $l \leq s \leq l+1$, which is illustrated in Figure $1(\mathrm{a})$.
	
	(2) The best known result for $d=3$ (to our knowledge) \cite{Bejenaru2011} covers the region $l>-1/2, l \leq$ $s \leq l+1,2 s>l+1/2$, whereas our result improves in the positive regularity region to that given by \eqref{result2}, which is illustrated in Figure $1(\mathrm{b})$.
	
	(3) We found the result in arXiv:2103.09259 \cite{Sanwal2021} when we have compiled this paper. We state that one cannot go beyond the line $l=0$ using the normal form method, and hence we are unable to reach the lowest point $(s, l)=\left(0,-1/2\right)$. Our result can cover the line $s-l=-1$ which was not covered in \cite{Sanwal2021}, however, the result in \cite{Sanwal2021} does give a “broader” region of LWP.
	\end{remark}

\begin{figure}[htp]
	\subfigure[$d=2$: grey region for \cite{Bourgain1996} and new region enclosed by red lines.]
	{	
		\centering
		\includegraphics[width=2in]{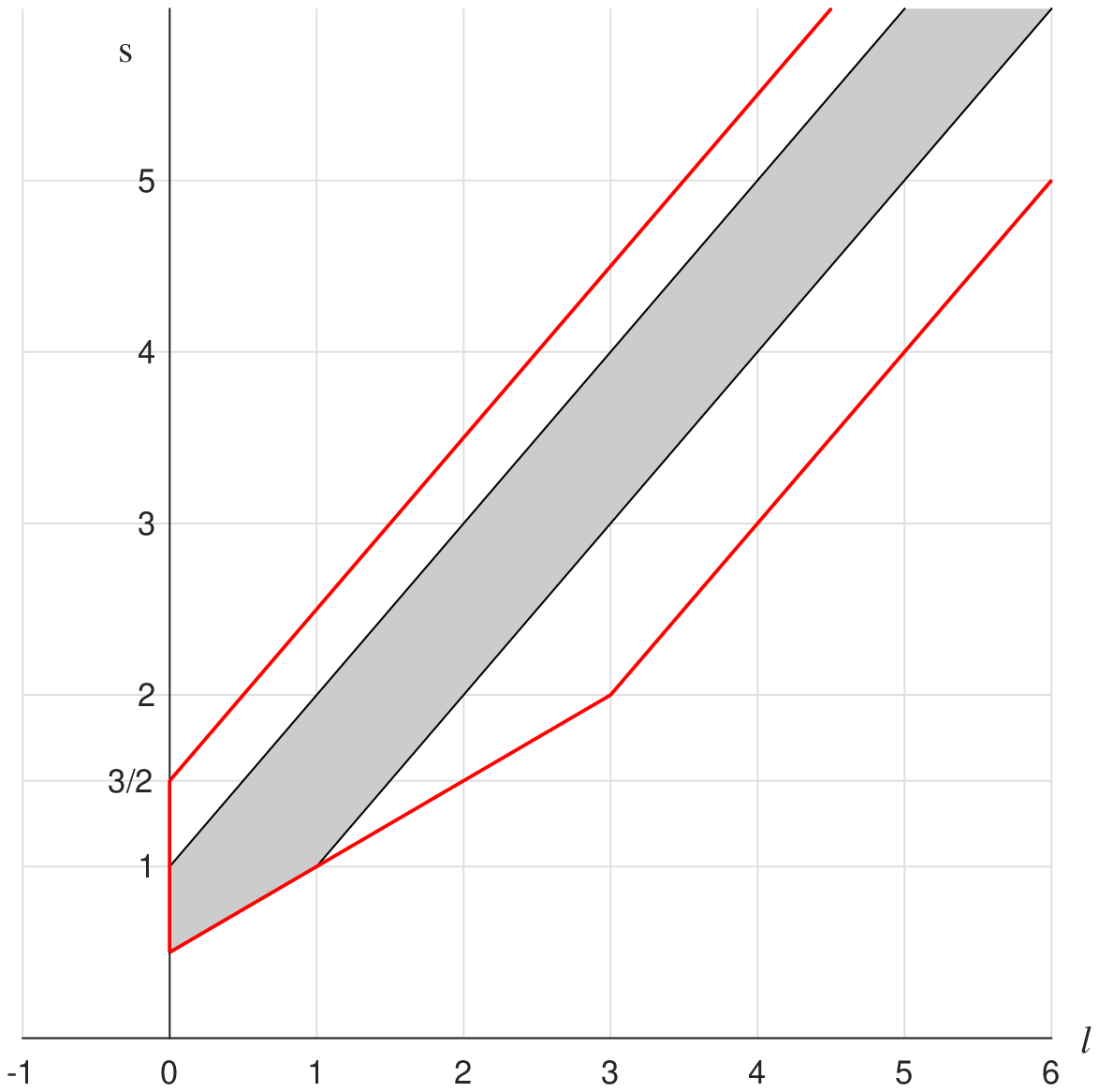}
		\label{fig:subfig:a}
}
	\subfigure[$d=3$: grey region for \cite{Bejenaru2011} and new region enclosed by red lines.]
	{
		\centering
		\includegraphics[width=2in]{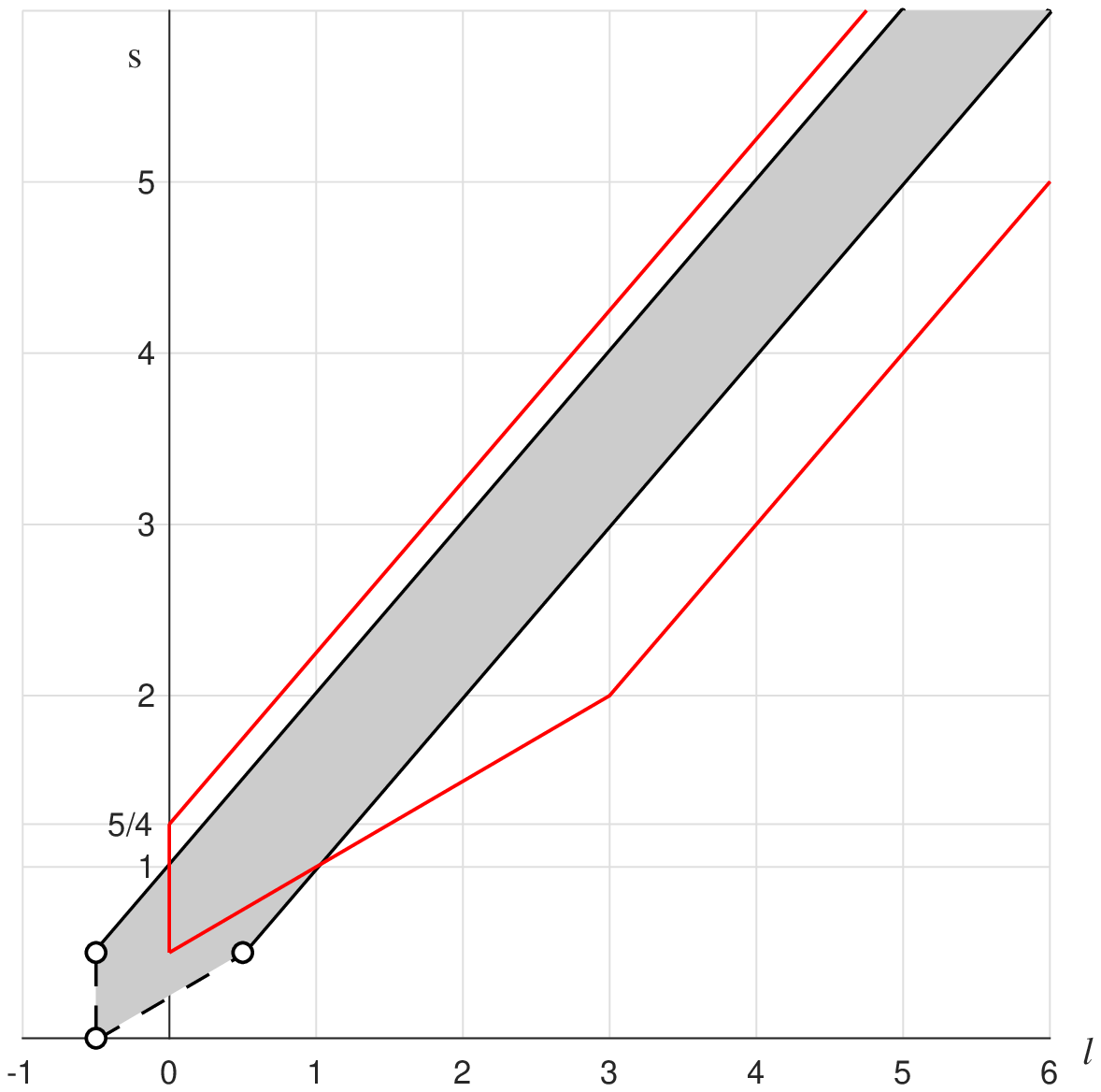}
		\label{fig:subfig:b}
}
	\caption{Region of regularity $(s, l)$.}
	\label{}
\end{figure}
\section{Notations and normal form}
This section is devoted to introducing some basic preparations. We use $S(t)$ and $W_{\alpha}(t)$ to respectively denote Schr\"{o}dinger and wave semigroup:
\begin{equation}
	S(t) \phi=\mathcal{F}^{-1} e^{i t \left|\xi\right|^{2}} \hat{\phi}, \quad W_{\alpha}(t) \phi=\mathcal{F}^{-1} e^{i \alpha t|\xi|} \hat{\phi}, \quad \hat{\phi}=\mathcal{F} \phi,
\end{equation}
where $\mathcal{F}$ denotes the spatial Fourier transform. Let $\eta_{0}: \mathbb{R}^{d} \rightarrow[0,1]$ denote a radial, smooth function supported in $\{|\xi| \leq 8 / 5\}$, which is equal to 1 in $\{|\xi| \leq 5 / 4\} .$ For $k \in \mathbb{Z}$, let $\chi_{k}(\xi)=\eta_{0}\left(\xi / 2^{k}\right)-\eta_{0}\left(\xi / 2^{k-1}\right)$ and
$\chi_{\leq k}(\xi)=\eta_{0}\left(\xi / 2^{k}\right) .$ Let $P_{k}$, $P_{\leq k}$ denote the operators on $L^{2}\left(\mathbb{R}^{d}\right)$ which are defined by $\widehat{P_{k} u}(\xi)=\chi_{k}(|\xi|) \widehat{u}(\xi)$, $\widehat{P_{\leq k} u}(\xi)=\chi_{\leq k}(|\xi|) \widehat{u}(\xi)$.

Let $s, l \in \mathbb{R}$ and $1 \leq p, q \leq \infty$. The standard homogeneous Besov and the  inhomogeneous Besov spaces are defined respectively by
\begin{equation}
	\begin{aligned}
	\|f\|_{\dot{B}_{p, q}^{s}}=\left\| 2^{k s}\left\|P_{k} f\right\|_{p}\right\|_{\ell_{k}^{q}(\mathbb{Z})}, \
	 \|f\|_{B_{p, q}^{s}}=\|P_{\leq 0}f\|_p+\left(\sum_{k=1}^{\infty} 2^{k s q}\left\|P_{k} f\right\|_{p}^{q}\right)^{1 / q}.
	\end{aligned}
\end{equation}
Here, we simply write $B_{p}^{s}=B_{p, 2}^{s}$, $H^{s}=B_{2}^{s}$ and the same abbreviations are defined in the homogeneous Besov spaces. 

Next, we turn to introduce the normal form reduction.  For two functions $N$, $u$ and a fixed $K \in \mathbb{N}, K \geq 5$,  $Nu$ is divided into
\begin{equation}
	N u =(N u)_{H H}+(N u)_{L H}+(N u)_{H L} .
\end{equation}
Here $(N u)_{H H}$ and $(N u)_{L H}$ are respectively denoted as:
\begin{equation*}
	(N u)_{H H}:=\sum_{k_{1}, k_{2} \in \mathbb{Z} \atop \left|k_{1}-k_{2}\right| \leq K-1} P_{k_{1}} N P_{k_{2}} u,\
	(N u)_{L H}:=\sum_{k \in \mathbb{Z}} P_{\leq k-K} N P_{k} u, 
\end{equation*}
and 
$$ (N u)_{H L}:=\sum_{k \in \mathbb{Z}} P_{k} N P_{\leq k-K} u.$$
Denote $\beta \geq K+|\log_{2}\alpha|$. We also define
\begin{equation}
	\begin{aligned}
&(N u)_{\alpha L}:= \sum_{k \in \mathbb{Z} \atop k \leq \beta} P_{k} N P_{\leq k-K} u, \ (N u)_{L \alpha}:=(u N)_{\alpha L},\\
&(N u)_{X L}:=\sum_{k \in \mathbb{Z} \atop k> \beta} P_{k} N P_{\leq k-K} u,  \ (N u)_{L X}:=(u N)_{X L},
\end{aligned}
\end{equation}
 so that
\begin{equation}
	\begin{aligned}
		(N u)_{H L}=(N u)_{\alpha L}+(N u)_{X L}.
	\end{aligned}
\end{equation}
For any such index $*=H H, H L, \alpha L,$ etc., the bilinear multiplier is denoted by
\begin{equation}
	\mathcal{F}(N u)_{*}=\int \mathcal{P}_{*} \hat{N}(\xi-\eta) \hat{u}(\eta) d \eta.
\end{equation}

In view of Duhamel's formula and the normal form reduction as in \cite{Bejenaru2015} and \cite{Guo2014}, the system $\eqref{equivalent1.1}$ can be rewritten as the following integral equations:
\begin{equation}\label{u}
	\begin{aligned}
		u=& S(t) u_{0}-S(t) \Omega(N, u)(0)+\Omega(N, u)(t)\\
		&-i \int_{0}^{t} S(t-s)(N u)_{L H+H H+\alpha L}(s) d s\\
		&-i \int_{0}^{t} S(t-s) \Omega\left(\alpha D|u|^{2}, u\right)(s) d s-i \int_{0}^{t} S(t-s) \Omega(N, N u)(s) d s
	\end{aligned}
\end{equation}
for $\Omega(f, g)=\mathcal{F}^{-1} \int \mathcal{P}_{X L} \frac{\hat{f}(\xi-\eta) \hat{g}(\eta)}{-|\xi|^{2}+\alpha|\xi-\eta|+|\eta|^{2}} d \eta.$
\begin{equation}\label{N}
	\begin{aligned}
		N=& W_{\alpha}(t) N_{0}-W_{\alpha}(t) D \tilde{\Omega}(u, u)(0)+D \tilde{\Omega}(u, u)(t)\\
		&-i\int_{0}^{t} W_{\alpha}(t-s) \alpha D(u \bar{u})_{H H+\alpha L+L \alpha} (s)d s \\
		&-i\int_{0}^{t} W_{\alpha}(t-s)(D \tilde{\Omega}(N u, u)+D \tilde{\Omega}(u, N u))(s) d s
	\end{aligned}
\end{equation}
for $\tilde{\Omega}(f, g)=\mathcal{F}^{-1} \int \mathcal{P}_{X L+L X} \frac{\alpha \hat{f}(\xi-\eta) \hat{\bar{g}}(\eta)}{|\xi-\eta|^{2}-|\eta|^{2}-\alpha|\xi|} d \eta.$

Therefore, the equations, after normal form reduction, can be read as
\begin{equation}\label{final equation}
	\begin{array}{l}
		\left(i \partial_{t}+D^{2}\right)(u-\Omega(N, u))=(N u)_{L H+H H+\alpha L}+\Omega\left(\alpha D|u|^{2}, u\right)+\Omega(N, N u) ,\\
		\left(i \partial_{t}+\alpha D\right)(N-D \tilde{\Omega}(u, u))=\alpha D|u|_{H H+\alpha L+L \alpha}^{2}+D \tilde{\Omega}(N u, u)+D \tilde{\Omega}(u, N u).
	\end{array}
\end{equation}

We will use the following Strichartz estimates for the Schr\"{o}dinger and the wave equation.
\begin{lemma} (Strichartz estimates, see \cite{Keel1998})\label{strichartz}
	For  any functions $\phi(x)$ and $f(t, x)$, 
	
	(1) if $(q, r)$ and $(\tilde{q}, \tilde{r})$ both satisfy the Schr\"{o}dinger-admissible condition:
	\begin{equation}
		2\leq q, r \leq \infty, \frac{1}{q}=\frac{d}{2}\left(\frac{1}{2}-\frac{1}{r}\right), (q, r, d) \neq(2, \infty, 2),
		\end{equation}
	where
	\begin{equation}
	\begin{cases}2 \leq r<\infty, & d=2, \\ 2 \leq r \leq 6, & d = 3,\end{cases}
	\end{equation}
	then
	\begin{equation}
		\|S(t) \phi\|_{L_{t}^{\infty} H_{x}^{s} \cap L_{t}^{q} B_{r}^{s}} \lesssim\|\phi\|_{H^{s}},
	\end{equation}
	\begin{equation}\label{s-strichartz}
		\left\|  \int^{t}_{0}S(t-s)f(s) ds \right\|_{L_{t}^{\infty} H_{x}^{s} \cap L_{t}^{q} B_{r}^{s}}
		\lesssim \left\| f \right\|_{L^{\tilde{q}^{\prime}}_{t}B^{s}_{\tilde{r}^{\prime}}}.
	\end{equation}
	
	(2) If $(\tilde{q}, \tilde{r})$ satisfies the wave-admissible condition:
		\begin{equation}
		2\leq \tilde{q}, \tilde{r} \leq \infty, \frac{1}{\tilde{q}}=\frac{d-1}{2}\left(\frac{1}{2}-\frac{1}{\tilde{r}}\right), (\tilde{q}, \tilde{r}, d) \neq(2, \infty, 3),
	\end{equation}
	where
	\begin{equation}
		\begin{cases}2 \leq \tilde{r} \leq \infty, & d=2, \\ 2 \leq \tilde{r}<\infty, & d=3, \end{cases}
	\end{equation}
	then
	\begin{equation}
		\|W_{\alpha}(t) \phi\|_{L_{t}^{\infty} H_{x}^{l}} \lesssim\|\phi\|_{H^{l}},
	\end{equation}
	\begin{equation}
		\left\|  \int^{t}_{0}W_{\alpha}(t-s)f(s)ds \right\|_{L_{t}^{\infty} H_{x}^{l}} \lesssim \left\| f \right\|_{L^{\tilde{q}^{\prime}}_{t}B^{l}_{\tilde{r}^{\prime}}}.
	\end{equation}
\end{lemma}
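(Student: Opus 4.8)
The plan is to reduce this to the classical Strichartz theory of Keel and Tao \cite{Keel1998}, whose only genuine extension needed here is the passage from Lebesgue to the Besov spaces appearing in the statement. I would proceed in three steps: first establish the pointwise dispersive decay of $S(t)$ and $W_\alpha(t)$; then run the abstract $TT^*$/Keel--Tao interpolation, together with the Christ--Kiselev lemma for the Duhamel terms; and finally restore the Besov formulation by a Littlewood--Paley decomposition.

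For the decay estimates, the Schr\"odinger kernel is explicit, $S(t)\phi=(4\pi i t)^{-d/2}e^{i|x|^2/(4t)}*\phi$, so $\|S(t)\phi\|_{L^\infty_x}\lesssim|t|^{-d/2}\|\phi\|_{L^1_x}$, and interpolating with the $L^2_x$-isometry gives $\|S(t)S(s)^*\|_{L^{r'}_x\to L^r_x}\lesssim|t-s|^{-d(1/2-1/r)}$. For the half-wave group I would localize to a dyadic shell $|\xi|\sim2^k$ and use stationary phase on the sphere to obtain $\|W_\alpha(t)P_k\phi\|_{L^\infty_x}\lesssim2^{k(d+1)/2}|t|^{-(d-1)/2}\|P_k\phi\|_{L^1_x}$, hence $\|W_\alpha(t)P_kW_\alpha(s)^*P_k\|_{L^{\tilde r'}_x\to L^{\tilde r}_x}\lesssim2^{k(d+1)(1/2-1/\tilde r)}|t-s|^{-(d-1)(1/2-1/\tilde r)}$. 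Feeding these bounds into the abstract Keel--Tao theorem — or, equivalently, running $TT^*$ and closing with the Hardy--Littlewood--Sobolev inequality, whose exponent hypothesis is precisely the admissibility relation $\frac1q=\frac d2(\frac12-\frac1r)$, resp. $\frac1{\tilde q}=\frac{d-1}{2}(\frac12-\frac1{\tilde r})$ — yields the homogeneous estimates $\|S(t)\phi\|_{L^q_tL^r_x}\lesssim\|\phi\|_{L^2}$ and $\|W_\alpha(t)P_k\phi\|_{L^{\tilde q}_tL^{\tilde r}_x}\lesssim2^{k\gamma}\|P_k\phi\|_{L^2}$ with $\gamma=\tfrac{d+1}{2}(\tfrac12-\tfrac1{\tilde r})$, together with their $L^2_x$-valued adjoints; the retarded (time-truncated) versions then follow by the Christ--Kiselev lemma at non-endpoint exponents and by the bilinear form of the Keel--Tao argument at the endpoint. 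The $L^\infty_tH^l_x$ bound for the wave Duhamel term carries no space--time gain on the output side and is therefore just the dual of the frequency-localized Strichartz inequality combined with the unitarity of $W_\alpha(t)$ on $H^l_x$, the $2^{k\gamma}$ factor being absorbed by the Besov structure of the input norm. To upgrade everything to Besov spaces I would use that $P_k$ commutes with both propagators and that the constants above are scale-invariant, apply the Lebesgue estimates to each piece $P_k(\cdot)$, insert the weights $2^{ks}$ (resp. $2^{kl}$) and sum in $\ell^2_k$; Minkowski's inequality permits interchanging $\ell^2_k$ with $L^q_tL^r_x$ on the output side (legitimate since $q,r\ge2$) and with $L^{\tilde q'}_tL^{\tilde r'}_x$ on the input side (since $\tilde q',\tilde r'\le2$).

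The one genuinely delicate point is the endpoint $q=2$ of the Keel--Tao scheme — in the present setting this occurs only for the Schr\"odinger group in $d=3$, namely the pair $(2,6)$ and the corresponding endpoints of the inhomogeneous estimates — where the $TT^*$ plus Hardy--Littlewood--Sobolev argument no longer closes and one must instead invoke the bilinear/atomic-decomposition argument of \cite{Keel1998}; everything else is routine. This is also why the two triples $(q,r,d)=(2,\infty,2)$ and $(\tilde q,\tilde r,d)=(2,\infty,3)$ must be excluded: they are exactly the configurations for which even that refined argument — and indeed the estimate itself — fails.
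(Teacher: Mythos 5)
The paper states this lemma without proof, citing \cite{Keel1998}, so your reconstruction via dispersive decay, the Keel--Tao $TT^{*}$ scheme, Christ--Kiselev for the retarded terms, and Littlewood--Paley summation to pass to Besov norms is the natural route, and it does handle part (1) correctly: all Schr\"{o}dinger-admissible pairs sit at the same scaling level, so no derivatives are lost and the $\ell^{2}_{k}$ summation via Minkowski goes through exactly as you describe (with the endpoint $(2,6)$ in $d=3$ covered by the bilinear argument of \cite{Keel1998}).

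The gap is in part (2). Your frequency-localized wave estimate $\|W_{\alpha}(t)P_{k}\phi\|_{L^{\tilde q}_{t}L^{\tilde r}_{x}}\lesssim 2^{k\gamma}\|P_{k}\phi\|_{L^{2}}$ with $\gamma=\tfrac{d+1}{2}(\tfrac12-\tfrac1{\tilde r})$ is correct and sharp, but the factor $2^{k\gamma}$ is \emph{not} absorbed by the Besov structure of the input norm: both sides of the claimed inequality carry the same regularity index $l$, so after dualizing, multiplying by $2^{kl}$ and summing in $k$ your argument only yields $\bigl\|\int_{0}^{t}W_{\alpha}(t-s)f\,ds\bigr\|_{L^{\infty}_{t}H^{l}}\lesssim\|f\|_{L^{\tilde q'}_{t}B^{l+\gamma}_{\tilde r'}}$, which is strictly weaker than the stated bound whenever $\tilde r>2$, i.e.\ whenever $\gamma>0$. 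This is not a removable defect of your argument: for $L^{\infty}_{t}L^{2}_{x}$ output the retarded estimate is equivalent, by duality and unitarity, to the homogeneous bound $\|W_{\alpha}(t)\phi\|_{L^{\tilde q}_{t}L^{\tilde r}_{x}}\lesssim\|\phi\|_{L^{2}}$, and Knapp examples show this fails for $\tilde r>2$ even for frequency-localized data on a bounded time interval. So your proof establishes part (2) only for $(\tilde q,\tilde r)=(\infty,2)$, where $\gamma=0$; to obtain the inequality in the form the paper actually invokes (e.g.\ with $(\tilde q',\tilde r')=(4/3,4/3)$ in $d=3$ or $(8/7,4/3)$ in $d=2$) one must either place the loss $B^{l+\gamma}_{\tilde r'}$ on the right-hand side or supply a genuinely different argument --- the ``absorption'' step as written would fail.
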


\section{Local well-posedness for $d=3$}
In this section, we consider the local well-posedness for $d=3$. According to Lemma $\ref{strichartz}$, we define the following spaces:
\begin{equation}\label{Xs}
	\begin{aligned}
	u \in X^{s}:= &C \left([-T, T];H^{s}\left(\mathbb{R}^{3}\right)  \right) \cap L^{\infty}\left([-T,T];H^{s}\left(\mathbb{R}^{3}\right) \right)\\
	&\cap L^{q}\left([-T,T];B^{s}_{\frac{6q}{3 q - 4}}\left(\mathbb{R}^{3}\right) \right),
	\end{aligned}
\end{equation}
and
\begin{equation}\label{Yl}
	N \in Y^{l}:= C \left([-T,T];H^{l}\left(\mathbb{R}^{3}\right) \right) \cap L^{\infty}\left([-T,T];H^{l}\left(\mathbb{R}^{3}\right) \right),
\end{equation}
for $2< q \leq \infty$ and $T>0$. Finally, we determine $q=8/3$, and so \eqref{Xs} is rewritten as 
\begin{equation}\label{Xs-r}
	\begin{aligned}
	u \in X^{s}:= &C \left([-T, T];H^{s}\left(\mathbb{R}^{3}\right)  \right) \cap L^{\infty}\left([-T,T];H^{s}\left(\mathbb{R}^{3}\right) \right)\\
	&\cap L^{8/3}\left([-T,T];B^{s}_{4}\left(\mathbb{R}^{3}\right) \right).
	\end{aligned}
\end{equation}
The $q$ selection will be shown in the following subsections.

We now turn to prove multilinear estimates for the nonlinear terms of $\eqref{final equation}$ in the above spaces. For brevity, we selectively keep some dependence of the constants on $(s, l)$ and $\beta$.
\subsection{Quadratic terms}
Since we use the contraction mapping principle, $u$ and $N$ must have closed $X^{s}$ and $Y^{l}$ estimates, respectively. In terms of  H\"{o}lder's inequality for quadratic terms, we apply the following Strichartz estimates:
\begin{equation}\label{qs1}
	\left\|\int^{t}_{0}S(t-s)(Nu)_{L H+H H+\alpha L}(s) ds \right\|_{L_{t}^{\infty}H^{s} \cap L_{t}^{8/3}B^{s}_{4}} \lesssim \left\| (Nu)_{L H+H H+\alpha L}  \right\|_{L^{8/5}_{t}B^{s}_{4/3}},
\end{equation}
and
	\begin{equation}\label{qs2}
	\left\|\int_0^t W_{\alpha} (t-s) D(uv) _{HH + \alpha L+L\alpha }(s)ds\right\| _{L_t^{\infty}H^l} \lesssim \left\|D(uv)_{HH+\alpha L+L\alpha } \right\|_{L_{t}^{4/3}B^l_{4/3}}.
\end{equation}
In addition,  admissible conditions yield that the range of $q$ in \eqref{Xs} should be $2< q <4$ if we directly substitute spaces \eqref{Xs} and \eqref{Yl} into \eqref{qs1} and \eqref{qs2}, respectively.
\begin{lemma}\label{Quadratic terms} 
	
	(1) If $s, l \geq 0$, then for any $N(x)$ and $u(x)$,
	\begin{equation}\label{Quadratic 1}
		\left\|(N u)_{L H+HH}\right\|_{L^{8/5}_{t}B^{s}_{4/3}} \lesssim T^{1/4}\|N\|_{L_{t}^{\infty}H^{l}}\|u\|_{L_{t}^{8/3}B_{4}^{s}}.
	\end{equation}
	\begin{equation}\label{Quadratic 11}
		\left\|(N u)_{\alpha L}\right\|_{L^{8/5}_{t}B^{s}_{4/3}} \lesssim T^{1/4}C(\beta)\|N\|_{L_{t}^{\infty}H^{l}}\|u\|_{L_{t}^{q}B_{4}^{s}}.
	\end{equation}
	
	(2) If $2s \geq l+1$, then for any $u(x)$ and $v(x)$,
	\begin{equation}\label{Quadratic 2}
		\left\|D(u v)_{H H }\right\|_{L_{t}^{4/3}B^l_{4/3}} \lesssim T^{3/8}\|u\|_{L^{\infty}_{t}H^{s}}\|v\|_{L^{8/3}_{t}B_{4}^{s}}.
	\end{equation}
	\begin{equation}\label{Quadratic 22}
		\left\|D(u v)_{\alpha L+L \alpha }\right\|_{L_{t}^{4/3}B^l_{4/3}} \lesssim T^{3/8}C(\beta)\|u\|_{L^{\infty}_{t}H^{s}}\|v\|_{L^{8/3}_{t}B_{4}^{s}}.
	\end{equation}
\end{lemma}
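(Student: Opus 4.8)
\textbf{Proof proposal for Lemma \ref{Quadratic terms}.}

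The plan is to treat all four estimates by a Littlewood--Paley decomposition, reducing each bilinear piece to a sum over frequency blocks, and then to control each block by Bernstein's inequality together with the Coifman--Meyer bilinear estimate and Hölder in time (which produces the positive power of $T$). For \eqref{Quadratic 1}, I would write $(Nu)_{LH+HH}=\sum_{k} \sum_{|j-k|\le K} P_{\le k-K}N\,P_k u + \dots$ and observe that in every surviving block the output frequency is comparable to $2^{k}$, the highest input frequency; hence the $B^{s}_{4/3}$ norm is controlled by $\big\|2^{ks}\|P_k((Nu)_{LH+HH})\|_{4/3}\big\|_{\ell^2_k}$. On each block I bound $\|P_{\le k-K}N\cdot P_ku\|_{4/3}\lesssim \|P_{\le k-K}N\|_{L^2}\|P_ku\|_{L^4}$ by Hölder in space (exponents $1/4/3=1/2+1/4$); using $l\ge 0$ I absorb $\|P_{\le k-K}N\|_{L^2}\lesssim\|N\|_{H^l}$, while the factor $2^{ks}\|P_ku\|_{L^4}$ is square-summable in $k$ and gives $\|u\|_{B^s_4}$ once $s\ge 0$. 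Finally Hölder in $t$ on $[-T,T]$ turns $L^\infty_tL^2\cdot L^{8/3}_tL^4$ into $L^{8/5}_t$ at the cost of $T^{1/4}$ (since $5/8=1/\infty+3/8+1/4$). Estimate \eqref{Quadratic 11} is identical except that the low factor is $P_k N$ with $k\le\beta$ rather than $P_{\le k-K}N$; summing the geometric-type series over $k\le\beta$ contributes the constant $C(\beta)$, and the general $q$ (rather than $8/3$) is harmless since only Hölder in time changes the $T$-power.

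For \eqref{Quadratic 2} and \eqref{Quadratic 22} the output is again at high frequency $\sim 2^{k}$ but now both inputs $P_{k_1}u$, $P_{k_2}v$ sit at comparable high frequencies $2^{k_1}\sim 2^{k_2}\sim 2^k$ (for $HH$) or one input is low (for $\alpha L+L\alpha$); the new feature is the extra derivative $D$, so I must control $2^{k}\cdot 2^{ls}\|P_{k_1}u\,P_{k_2}v\|_{4/3}$. I would bound $\|P_{k_1}u\,P_{k_2}v\|_{4/3}\lesssim\|P_{k_1}u\|_{L^2}\|P_{k_2}v\|_{L^4}$, pull out $2^{-k_1 s}$ and $2^{-k_2 s}$, and note that the resulting multiplier $2^{k+ls}2^{-k_1 s}2^{-k_2 s}$ is bounded (summably in the off-diagonal indices) provided $2s\ge l+1$: indeed on the diagonal $k_1\sim k_2\sim k$ the exponent is $k(1+l-2s)\le 0$. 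Summing in $k$ against the two square-summable sequences $2^{k_i s}\|P_{k_i}\cdot\|$ and applying Hölder in time with $L^\infty_t L^2\cdot L^{8/3}_t L^4\hookrightarrow L^{4/3}_t$ (so $3/4=1/\infty+3/8+3/8$, wait---$3/8+3/8=3/4$ already, so the $T$-power is $T^{3/8}$ from interpolating $L^\infty_t$ against the finite interval) gives the claimed $T^{3/8}C(\beta)$, the $C(\beta)$ again coming from the $k\le\beta$ restriction in the $\alpha L,L\alpha$ terms.

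The main technical point---and the step I expect to require the most care---is the summation of the frequency series in the endpoint/corner cases of the hypotheses: at $s=0$ in part (1) or at $2s=l+1$ in part (2), the diagonal term of the Littlewood--Paley series has exponent exactly $0$, so one only gets square-summability (not absolute summability with a geometric gain) and must invoke the $\ell^2$ structure of the Besov norms rather than crude $\ell^1$ bounds; here it is important that the $HH$ interaction, despite having a genuinely two-parameter sum over $(k_1,k_2)$ with $|k_1-k_2|\le K$, collapses to a one-parameter sum with an $O(K)$-bounded overlap, and that the Coifman--Meyer theorem legitimately replaces the sharp cutoffs $\chi_k$ by smooth ones so the bilinear-multiplier norms are uniformly bounded. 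The $C(\beta)$ losses in \eqref{Quadratic 11} and \eqref{Quadratic 22} are where we pay for having separated $(Nu)_{HL}$ into $(Nu)_{\alpha L}+(Nu)_{XL}$: the $\alpha L$ piece with $k\le\beta$ has no high-frequency gain to sum against, so one simply bounds the $\ell^2_{k\le\beta}$ sum by $\sqrt{\beta}$ times the sup, or equivalently accepts a $\beta$-dependent constant, which is acceptable since $\beta$ is fixed once $K$ and $\alpha$ are fixed.
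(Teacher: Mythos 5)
Your proposal follows essentially the same route as the paper: a Littlewood--Paley decomposition of each bilinear piece, H\"older in space on each block via the $L^2\times L^4\to L^{4/3}$ pairing, the embedding $H^l\subset L^2$ for $l\ge 0$, H\"older in time on $[-T,T]$ to produce the factors $T^{1/4}$ and $T^{3/8}$, and a $\beta$-dependent constant coming from the restriction $k\lesssim\beta$ in the $\alpha L$, $L\alpha$ pieces. Apart from minor slips (the weight in part (2) should read $2^{k(l+1)}$ rather than $2^{k}\cdot 2^{ls}$, and for the $HH$ interaction the output frequency only satisfies $k\lesssim k_1\sim k_2$ rather than $k\sim k_1\sim k_2$, which your off-diagonal summation under $2s\ge l+1$ already accommodates), the argument matches the paper's.
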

\begin{proof}
	For (1), we begin with the $(Nu)_{LH}$ part. 
	Clearly, the norm for $t$ follows directly from H\"{o}lder's inequality. By $(N u)_{L H}=\sum_{j \in \mathbb{Z}} P_{\leq j-K} N P_{j} u$ and analyzing the support deduce that
	\begin{equation}
		\begin{aligned}
			\left\|P_{k}(N u)_{L H}\right\|_{L^{4 / 3}} \lesssim \sum_{j=k-2}^{k+2}\left\|\left(P_{\leq j-K} N\right)\left(P_{j} u\right)\right\|_{L^{4 / 3}} \lesssim \sum_{j=k-2}^{k+2}\|N\|_{L^{2}}\left\|P_{j} u\right\|_{L^{4}},
		\end{aligned}
	\end{equation}
then we obtain
	\begin{equation}\label{LH}
		\begin{aligned}
			\left\|(Nu)_{LH} \right\| _{B_{4/3}^s} \lesssim  \left\|2^{ks} \sum_{j=k-2}^{k+2}\|N\|_{L^{2}}\left\|P_{j} u\right\|_{L^{4}}\right\|_{\ell^2_k} \lesssim \left\|N \right\|_{H^{l}} \left\|u\right\|_{B^{s}_{4}},
		\end{aligned}
	\end{equation}
	where $H^{l}\left(\mathbb{R}^{3}\right) \subset L^{2}\left(\mathbb{R}^{3}\right)$ for  $l \geq0.$ 
	
	Similarly, the estimate $\left\|(N u)_{HH}\right\|_{L_{t}^{8/5}B_{4/3}^{s}} \lesssim T^{1/4}\|N\|_{L_{t}^{\infty}H^{l}}\|u\|_{L_{t}^{8/3}B_{4}^{s}}$ is a straightforward result as the proof given in $\eqref{LH}$.
	
	In particular, concerning the fact that  $(Nu)_{\alpha L}$ is in the support of $k \sim j \lesssim \beta$, we can conclude 
	\begin{equation}
		\left\|(Nu)_{\alpha L} \right\| _{L_{t}^{8/5}B^s_{4/3}}
		\lesssim T^{1/4} 2^{2\beta s} \left\|N \right\|_{L_{t}^{\infty}H^l}\left\|u \right\|_{L_{t}^{8/3}B^s_{4}}, 
	\end{equation}
	where we require $l\geq 0$ and $s \geq 0.$ Thus, (1) of  Lemma $\ref{Quadratic terms}$ is proven. 
	
	The proof for (2) follows in a similar manner.
\end{proof}
\begin{remark}
	In view of the admissible conditions in Lemma $\ref{strichartz}$, we can verify that for the proof of (1), the norm of $u$ can only adopt the space $L^{8/3}_{t}B_{4}^{s}$, instead of $L_t^{\infty}H^s$. The same verification is true for the space selections in the proof for (2).
\end{remark}
\subsection{Boundary terms}
We now turn to the boundary terms estimates. However, prior to that, we will review the crucial formulas given in \cite{Bejenaru2015}. Take $\Omega(N,u)$ as an example. By Coifman Meyer type bilinear multiplier and Bernstein estimates, we obtain
\begin{equation}\label{crucial}
	\begin{aligned}
		&\left\|P_{k}\langle D\rangle D\Omega(N, u) \right\|_{L^p} \lesssim \sum_{k_0=k-2}^{k+2}\left\|P_{k_0}N\right\|_{L^{p_1}} \sum_{k_{1} \leq k_0-K \atop k_{0}>\beta}\left\|P_{k_1}u\right\|_{L^{p_2}}\\
		&\lesssim \sum_{k_0=k-2}^{k+2}2^{k_0^{+}\left(\frac{3}{q_1}-\frac{3}{p_1}-l\right)}\left\|P_{k_{0}}N\right\|_{B^{l}_{q_1}} \sum_{k_{1} \leq k_{0}-K \atop k_{0}>\beta}2^{k_1\left(\frac{3}{q_2}-\frac{3}{p_2}\right)-k_{1}^{+}s}\left\|P_{k_1}u\right\|_{B_{q_2}^{s}}
	\end{aligned}
\end{equation}
with
\begin{equation}\label{crucial condition}
	\left\{\begin{array}{l}
		k, k_{0}, k_{1}\in \mathbb{Z}, \\
		p, p_{1}, p_{2}, q_{1}, q_{2} \in[1, \infty],\\
		1 / p=1 / p_{1}+1 / p_{2}, \\
		p_{1} \geq q_{1}, p_{2} \geq q_{2},
	\end{array}\right.
\end{equation}
where $k_{i}^{+}:=\max(k_{i}, 0)$, using $P_{\leq 0} B_{p}^{s} \subset \dot{B}_{p, \infty}^{0}$ for the lower frequency component. The same estimate also holds for the bilinear operator $\tilde{\Omega}$.

\begin{lemma}\label{Boundary terms} For any $\theta_{i}(s, l) \geq 0$, as well as functions $N(x)$, $u(x)$,  and $v(x)$, we have the following:
	
	(1) If $l \geq -1/2$, $s \leq l+2$ and $(s, l) \neq \left(3/2, -1/2\right)$, then
	\begin{equation}\label{boundary1}
		\|\Omega(N, u)\|_{L_{t}^{\infty}H^{s}} \lesssim 2^{- \beta \theta_{1}}\|N\|_{L_{t}^{\infty}H^{l}}\|u\|_{L_{t}^{\infty}H^{s}}.
	\end{equation}
	
	(2) If $l \geq -1/2$, $s \leq l+5/4$ and $(s, l) \neq \left(3/4, -1/2\right)$, then
	\begin{equation}\label{boundary2}
		\|\Omega(N, u)\|_{L_{t}^{8/3}B^{s}_{4}} \lesssim 2^{- \beta \theta_{2} }\|N\|_{L_{t}^{\infty}H^{l}}\|u\|_{L_{t}^{8/3}B^{s}_{4}}.
	\end{equation}
	
	(3) If $s \geq \text{max} \left\{l-1,  l/2+1/4\right\}$ and $(s, l) \neq \left(3/2, 5/2\right)$, then
	\begin{equation}\label{boundary3}
		\|D \tilde{\Omega}(u, v)\|_{L_{t}^{\infty}H^{l}} \lesssim 2^{- \beta \theta_{3}}\|u\|_{L_{t}^{\infty}H^{s}}\|v\|_{L_{t}^{\infty}H^{s}}.
	\end{equation}
\end{lemma}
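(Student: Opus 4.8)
The plan is to estimate each of the three boundary bilinear terms by reducing to the Littlewood--Paley piece-wise bound \eqref{crucial} with a well-chosen quadruple of exponents, and then summing the resulting geometric series in the frequency indices. Recall that in all three terms the outer frequency $k$ is comparable to the high frequency $k_0$ (of $N$ for $\Omega$, of $u$ for $\tilde\Omega$) because of the $XL$ (resp. $XL+LX$) localization, and that $k_0>\beta$; the inner low frequency $k_1\le k_0-K$ ranges over all smaller scales. For \eqref{boundary1} I would take all Lebesgue exponents equal to $2$, i.e. $p=q_1=q_2=2$ and $p_1=p_2=\infty$ is not allowed, so instead $p_1=p_2$ cannot both be $2$; the correct choice is $p=2$, $p_1=q_1=2$ forces $p_2=\infty\ge q_2=2$, which is admissible. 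Then the left side of \eqref{crucial} at scale $k$ carries a weight $2^{-2k}$ from $\langle D\rangle D$ (since $k=k_0>\beta\ge0$), the $N$ factor contributes $2^{-k_0 l}\|P_{k_0}N\|_{B^l_2}$, and the $u$ factor contributes $2^{-\frac{3}{2}k_1-k_1^+ s}\|P_{k_1}u\|_{B^s_2}$. After multiplying by $2^{ks}$ and taking $\ell^2_k$, one is left with a sum over $k\sim k_0$ of $2^{k_0(s-2-l)}$ times an $\ell^2$-summable sequence, together with a sum over $k_1\le k_0-K$ of $2^{-\frac32 k_1-k_1^+ s}$ against $\|P_{k_1}u\|_{B^s_2}$; the latter is handled by Cauchy--Schwarz/Young since $-\frac32 k_1-k_1^+ s<0$ for $k_1$ large and is summable for $k_1\to-\infty$ precisely when the low-frequency exponent is nonnegative, which is where the $\dot B^0_{2,\infty}$ remark and $l\ge-1/2$, $s\ge0$-type conditions enter. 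The restriction $s\le l+2$ makes $2^{k_0(s-2-l)}$ nonincreasing, so the $k_0$-sum converges and, since $k_0>\beta$, produces the gain $2^{-\beta\theta_1}$ with $\theta_1=l+2-s\ge0$; the excluded endpoint $(s,l)=(3/2,-1/2)$ is exactly the case $s-l-2=-1/2$ where some borderline logarithmic divergence (or a failure of the strict inequality needed for a clean geometric sum) occurs.

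For \eqref{boundary2} the target norm on the left is $L^{8/3}_t B^s_4$ and the $u$-norm on the right is the same, while $N$ stays in $L^\infty_t H^l$. In time one uses H\"older trivially (both sides are $L^{8/3}_t$, $N$ in $L^\infty_t$). In space one again invokes \eqref{crucial}, now with $p=4$ on the output: a natural choice is $p_1=2$ (so $q_1=2$, matching $\|P_{k_0}N\|_{B^l_2}$) and $p_2=4$ (so $q_2=4$, matching $\|P_{k_1}u\|_{B^s_4}$), which satisfies $1/4=1/2+1/4$. Bernstein/Coifman--Meyer then produce the exponent $\frac{3}{q_1}-\frac{3}{p_1}=0$ on $N$ and $\frac{3}{q_2}-\frac{3}{p_2}=0$ on $u$, so the only scale weights come from $\langle D\rangle D$ at scale $k_0$ (giving $2^{-2k_0}$, using $k_0>\beta>0$) and from the Sobolev weights $2^{-k_0 l}$, $2^{-k_1^+ s}$. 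Multiplying by $2^{ks}$, summing in $\ell^2_k$ over $k\sim k_0$, and summing the inner $k_1\le k_0-K$ (again using the low-frequency $\dot B^0_{q,\infty}$ trick to absorb $k_1\to-\infty$), one gets a $k_0$-sum of $2^{k_0(s-l-5/4)}$-type — wait, one must recount: the $\langle D\rangle D$ loses two derivatives, but the admissible pair $(8/3,4)$ for the wave/Schr\"odinger Strichartz in $d=3$ only gives back $5/4$ derivatives' worth through Bernstein on the $L^4\to$ dual chain, which is precisely why the condition is $s\le l+5/4$ rather than $s\le l+2$; the net $k_0$-exponent is $s-l-5/4\le0$, yielding the gain $2^{-\beta\theta_2}$ with $\theta_2=l+5/4-s$, and the endpoint $(s,l)=(3/4,-1/2)$ is again the borderline $s-l-5/4=-1$ case.

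For \eqref{boundary3} the roles are reversed: $\tilde\Omega$ has $XL+LX$ localization in the \emph{two $u$-arguments}, so the high frequency is carried by one copy of $u$ and the low frequency by the other, with output frequency $k\sim k_0>\beta$; the extra $D$ in front of $\tilde\Omega$ plus the $\langle D\rangle D$ already built into \eqref{crucial} must be reconciled — one uses the analogue of \eqref{crucial} for $\tilde\Omega$, peels off the single $D$, and lands in $L^\infty_t H^l$ on the left with both $u,v$ in $L^\infty_t H^s$ on the right, so all Lebesgue exponents are $2$ as in case (1). The Sobolev bookkeeping now gives an output weight $2^{k_0 l}$ to be controlled by the input weights $2^{-k_0 s}$ (high) and $2^{-k_1^+ s}$ (low), net exponent on $k_0$ of order $l-2s+\cdots$; the two conditions $s\ge l-1$ and $s\ge l/2+1/4$ are exactly the two constraints one needs — one from matching the high-frequency Sobolev exponent after the denominator gain $\sim|\xi|^{-2}\sim2^{-2k_0}$, the other from the low-frequency $k_1$-summation meeting the $\dot B^0_{2,\infty}$ endpoint — with the excluded point $(3/2,5/2)$ the borderline of the first. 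In all three cases the main obstacle is organizing the frequency sums so that (i) the $k_0>\beta$ sum is genuinely geometric (strictly decreasing, hence the clean $2^{-\beta\theta_i}$ gain, which forces the \emph{strict} side inequalities and explains each excluded endpoint), and (ii) the $k_1\to-\infty$ tail converges, for which the low-frequency Besov embedding $P_{\le0}B^s_p\subset\dot B^0_{p,\infty}$ noted after \eqref{crucial condition} is essential; once the exponent choices in \eqref{crucial condition} are fixed as above, everything else is Cauchy--Schwarz in $\ell^2_k$, Young's inequality for the inner convolution sum, and the elementary summation of geometric series. I expect step (i) — pinning down the precise net $k_0$-exponent ($l+2-s$, $l+5/4-s$, and the $\tilde\Omega$ analogue) and confirming it matches the stated regularity windows and excluded points — to be the delicate part; the rest is routine.
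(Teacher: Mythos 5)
Your overall strategy -- apply \eqref{crucial} with a suitable choice of exponents, then sum geometric series in $k_1$ and in $k\sim k_0>\beta$ -- is exactly the paper's, but the concrete exponent bookkeeping contains errors that break the argument. The most serious is in part (2): you propose the H\"older split $p=4$, $p_1=2$, $p_2=4$ and assert it ``satisfies $1/4=1/2+1/4$,'' but $1/2+1/4=3/4\neq 1/4$, so this split is inadmissible. Consequently your claim that both Bernstein exponents $\frac{3}{q_i}-\frac{3}{p_i}$ vanish is unavailable, and your first computation yields the wrong threshold $s\le l+2$; you notice the mismatch (``wait, one must recount'') but patch it with an appeal to Strichartz admissibility of $(8/3,4)$ that is not where the $5/4$ comes from. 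The correct choice (the paper's) is $(p,p_1,p_2,q_1,q_2)=(4,4,\infty,2,4)$, i.e. $1/4=1/4+0$: Bernstein $L^2\to L^4$ on $P_{k_0}N$ costs $2^{3k_0/4}$ and $L^4\to L^\infty$ on $P_{k_1}u$ costs $2^{3k_1/4}$, so the net outer exponent is $s-2+3/4-l=s-l-5/4$ and the inner sum over $0<k_1\le k-K$ of $2^{k_1(3/4-s)}$ produces the three subcases $s>3/4$, $s<3/4$, $s=3/4$; combining these gives precisely the window $l\ge-1/2$, $s\le l+5/4$ with $(3/4,-1/2)$ excluded.

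Two further slips: in part (1) you write the low-frequency weight as $2^{-\frac32 k_1-k_1^+s}$; the Bernstein factor is $2^{k_1(\frac{3}{q_2}-\frac{3}{p_2})}=2^{+\frac32 k_1}$ (for $(p_2,q_2)=(\infty,2)$), and with your sign the tail $k_1\to-\infty$ would actually diverge rather than converge as you claim. Also, at the excluded point $(3/2,-1/2)$ one has $s-l-2=0$, not $-1/2$; the obstruction there is the logarithmic factor $k$ from the borderline inner sum at $s=3/2$ multiplying $2^{k(-l-1/2)}=2^{0}$, so the $k>\beta$ sum diverges. Your part (3) is too schematic to check (``net exponent of order $l-2s+\cdots$''), but the paper again uses $(2,2,\infty,2,2)$, and the two constraints arise as $l-1-s\le 0$ from the outer sum and $l+1/2-2s\le 0$ after absorbing the inner sum in the regime $s<3/2$, with $(3/2,5/2)$ the borderline of the first at the logarithmic case of the second. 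In short: right framework, but the exponent choices you commit to do not satisfy the constraints in \eqref{crucial condition} and the signs in the Bernstein gains need to be fixed before the sums close.
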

\begin{proof}
	Here we prove $\eqref{boundary2}$ as an example. In view of $\eqref{crucial}$, a short computation with $(p,p_1,p_2,q_1,q_2)= \left(4 , 4, \infty, 2, 4\right)$ shows that 
	\begin{equation}
		\begin{aligned}
			\left\|\Omega(N,u)\right\|_{B^s_{4}}
			&\lesssim \left\|2^{k^{+}\left(s-l-5/4\right)} \left\|P_{k}N \right\|_{H^{l}}\sum\limits_{k_1 \leq k-K \atop k > \beta} 2^{3k_1/4-k_{1}^{+}s} \left\|P_{k_1}u \right\|_{B^s_{4}}\right\|_{\ell^2_k}.
		\end{aligned}
	\end{equation}
	It suffices to discuss
	\begin{equation}\label{summation}
	2^{k^{+}\left(s-l-5/4\right)}\sum\limits_{k_1 \leq k-K \atop k > \beta} 2^{3k_{1}/4-k_{1}^{+}s}.
	\end{equation}
   First, the summation over $ k_1 \leq k-K$ deduces
	\begin{equation}
		k_{1} \leq 0 \Rightarrow \sum\limits_{k_1 \leq k-K} 2^{3k_{1}/4-k_{1}^{+}s}\lesssim 1,
			\end{equation}
		and
			\begin{equation}
		0 < k_1 \leq k-K \Rightarrow \sum\limits_{k_1 \leq k-K} 2^{3k_{1}/4-k_{1}^{+}s}
		\left\{
		\begin{array}{ll}
			k, \quad & s=3/4,\\
			1 ,\quad &s>3/4,\\
			2^{k\left(3/4-s\right)} ,\quad &s<3/4.
		\end{array}\right.
	\end{equation}
	Then, by a summation over $k$, we obtain for $\theta_{2}(s, l) \geq 0$,  $\eqref{summation} \lesssim 2^{-\beta\theta_{2}(s, l)}$ if $l \geq -1/2$, $s \leq l+5/4$ and $(s, l) \neq \left(3/4, -1/2\right)$. This and $\left\|P_{k} N\right\|_{H^{l}} \in \ell_{k}^{2}$, together with H\"{o}lder's inequality for $t$ lead to \eqref{boundary2}.
	
	Since the proofs are all similar, we only give the choices of  $(p, p_{1}, p_{2}, q_{1}, q_{2})$ for \eqref{boundary1} and \eqref{boundary3}, both of which are $\left(2, 2,\infty, 2, 2\right)$. 
\end{proof}
\begin{remark}
	
	(1) We now explain how to determine $p, p_{1}, p_{2}, q_{1}, q_{2}$ for $\eqref{boundary2}$. In order to guarantee that $u$ and $N$, here mainly corresponding to $t$, respectively have closed $X^{s}$ and $Y^{l}$ estimates when using H\"{o}lder's inequality, we shall be forced to take $q_{1}=2$, and $q_{2}=4.$ In terms of the confined conditions in $\eqref{crucial condition}$, $p_{1}$ should be $\max\{p, q_{1}\}=4$. Furthermore, $1/p=1/p_1+1/p_2$ derives $p_2=\infty$. The same method of parameter selection also holds for \eqref{boundary1} and \eqref{boundary3}.
	
	(2) The range of $q$ here should be $2< q \leq \infty$ if spaces \eqref{Xs} and \eqref{Yl} are directly used for discussion.
	\end{remark}
\subsection{Cubic terms}
In this section, we deal with the cubic terms. 
\begin{lemma}\label{Cubic terms}  For any $M(x), N(x), u(x), v(x)$, we have the following:
	
	(1) If  $l \geq -1/4$, $s \leq \min\{l+2, 2l+5/4\}$ and $(s, l) \neq \left(7/4, -1/4\right), \left(11/4, 3/4\right),\\ \left(3/4, -1/4\right)$, then
	\begin{equation}\label{MNu}
		\|\Omega(M, N u)\|_{L^{8/5}_{t}B_{4/3}^{s}} \lesssim T^{1/4}\|M\|_{L^{\infty}_{t}H^{l}}\|N\|_{L^{\infty}_{t}H^{l}}\|u\|_{L^{8/3}_{t}B_{4}^{s}}.
	\end{equation}
	
	(2) If $s \geq 1/2,$ then
	\begin{equation}\label{uvw}
		\|\Omega(D(u v), w)\|_{L^{8/5}_{t}B_{4/3}^{s}} \lesssim T^{1/4} \left\|\|u\|_{H^{s}}\|v\|_{B_{4}^{s}}+\|u\|_ {B_{4}^{s}}\|v\|_{H^{s}}\right\|_{L_{t}^{8/3}}\|w\|_{L_{t}^{\infty}H^{s}}.
	\end{equation}
	
	(3) If $s \geq 1/4$, $ l \leq \min\{s+1, 2s+1/4\}$ and $(s, l) \neq \left(1/4, 3/4\right), \left(3/4, 7/4\right)$, 
	then 
	\begin{equation}
		\begin{aligned}
		&\|D \tilde{\Omega}(N u, v)\|_{L^{1}_{t}H^{l}}+\|D \tilde{\Omega}(v, N u)\|_{L^{1}_{t}H^{l}}\\ &\lesssim T^{1/4}\|N\|_{L^{\infty}_{t}H^{l}}\|u\|_{L^{8/3}_{t}B_{4}^{s}}\|v\|_{L^{8/3}_{t}B_{4}^{s}}.
		\end{aligned}
	\end{equation}
\end{lemma}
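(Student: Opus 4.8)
\textbf{Proof proposal for Lemma \ref{Cubic terms}.}

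The plan is to treat all three parts by the same mechanism already used for the boundary and quadratic terms: unfold each cubic expression as an iterated bilinear operator, apply the Coifman--Meyer/Bernstein estimate \eqref{crucial} to the outer $\Omega$ or $\tilde\Omega$ (so that a factor $2^{-\beta\theta}$ with a summable frequency kernel is extracted exactly as in Lemma \ref{Boundary terms}), and then estimate the inner quadratic factor $(Nu)$, $D(uv)$, or $(Nu)$ again by the Littlewood--Paley/H\"older argument of Lemma \ref{Quadratic terms}. Concretely, for part (1) I would write $\Omega(M,Nu)$ with the $XL$-multiplier acting on $\widehat M(\xi-\eta)\widehat{(Nu)}(\eta)$; by \eqref{crucial} with an appropriate choice of $(p,p_1,p_2,q_1,q_2)$ — matching the target space $L^{8/5}_tB^s_{4/3}$ on the outside and leaving $Nu$ in a convenient $L^r_tB^{\sigma}_\rho$ — the outer operator costs a kernel of the form $2^{k^+(s-l-2)}\sum_{k_1\le k-K,\,k>\beta}2^{(\text{gain})k_1-k_1^+\sigma}$, which is summable and yields $2^{-\beta\theta_1}$ precisely under $l\ge-1/4$, $s\le\min\{l+2,2l+5/4\}$ away from the stated corner points (these corners are where one of the geometric series goes logarithmic, exactly as $s=3/4$ did in the proof of \eqref{boundary2}). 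The remaining $\|(Nu)\|$ is then controlled by $\|N\|_{H^l}\|u\|_{B^s_4}$ as in \eqref{LH}, the $T^{1/4}$ coming from H\"older in time against the $L^\infty_t$ placement of $M$ and $N$.

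For part (2) the structure is the same with the inner factor now $D(uv)$: after \eqref{crucial} handles $\Omega(\cdot,w)$ with $w$ placed in $L^\infty_tH^s$, I would estimate $\|D(uv)\|$ by a Leibniz/Littlewood--Paley splitting into high-low and low-high pieces, which produces exactly the symmetric sum $\|u\|_{H^s}\|v\|_{B^s_4}+\|u\|_{B^s_4}\|v\|_{H^s}$ inside the $L^{8/3}_t$ norm; the hypothesis $s\ge1/2$ is what makes the derivative $D$ landing on the product affordable (it is the same $2s\ge l+1$-type balance as in \eqref{Quadratic 2}, here with $l$ replaced by $0$ up to the smoothing from $\Omega$), and since $s\ge1/2$ keeps the relevant exponents strictly off the logarithmic threshold no corner needs to be excised. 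Part (3) is the mirror image: the wave-side operator $D\tilde\Omega$ is estimated by the analogue of \eqref{crucial} for $\tilde\Omega$, its kernel gives a gain $2^{-\beta\theta_3}$ under $l\le\min\{s+1,2s+1/4\}$ with the two corner points removed, and the inner $(Nu)$ factor is bounded by $\|N\|_{H^l}\|u\|_{B^s_4}$, after which H\"older in time against $v\in L^{8/3}_tB^s_4$ (note $1=\tfrac1\infty+\tfrac3{8}+\tfrac3{8}$ up to the time-truncation factor) closes the estimate in $L^1_tH^l$; symmetry in the two slots of $\tilde\Omega$ handles $D\tilde\Omega(v,Nu)$ identically.

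The main obstacle is the bookkeeping of the frequency interactions in the double summation coming from the nested bilinear operators: one must verify that for \emph{every} choice of which factor is high and which is low in the inner quadratic term, the combined kernel $2^{k^+(\cdots)}\sum 2^{(\cdots)k_1-k_1^+(\cdots)}$ (and the further inner sum) is still summable and still produces a genuine power $2^{-\beta\theta_i}$ with $\theta_i\ge0$, and to pin down exactly which $(s,l)$ make some geometric series marginal — these are the excluded corner points. Choosing the Strichartz exponents $(p,p_1,p_2,q_1,q_2)$ compatibly at \emph{both} the outer and inner stages (so that the admissibility constraints of Lemma \ref{strichartz} and the constraints \eqref{crucial condition} are simultaneously met, and $u$ necessarily sits in $L^{8/3}_tB^s_4$ rather than $L^\infty_tH^s$) is the delicate part; once the exponents are fixed, each individual estimate is a routine repetition of the computations already displayed for \eqref{boundary2} and \eqref{LH}, so I would present part (1) in full detail and then only record the exponent choices and the decisive inequalities for parts (2) and (3).
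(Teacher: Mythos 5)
Your overall strategy---apply the Coifman--Meyer/Bernstein bound \eqref{crucial} to the outer bilinear operator, Littlewood--Paley decompose the inner product, and sum the nested dyadic series over every high/low configuration with exponents chosen to respect \eqref{crucial condition} and the admissibility constraints---is exactly what the paper does: its proof of (1) splits $P_{k_1}(Nu)$ into three cases (inner low-high, high-high, high-low) and performs the full trilinear summation with a tuple $(p,\sigma,p_1,p_2,p_3,q_1,q_2,q_3)$ chosen per case.

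One caution about the step in your second paragraph where the remaining factor is ``controlled by $\|N\|_{H^l}\|u\|_{B^s_4}$ as in \eqref{LH}'': taken literally this is a factorized bound that places the inner product $Nu$ in a single Besov space of regularity $s$, which is only valid for the low-high and high-high interactions inside $Nu$. The high-low interaction (where $N$ carries the high frequency and $u$ the low one, the paper's \textbf{Case 3}) is not controlled in $B^s$ by that product of norms, and it is precisely this configuration that produces the constraint $s\le 2l+5/4$ and the excluded corners $(11/4,3/4)$ and $(3/4,-1/4)$ (cf.\ \eqref{bounded32}--\eqref{bounded33}); the remaining condition $l\ge -1/4$ and the corner $(7/4,-1/4)$ likewise arise only from the combined summation over all three frequencies in \textbf{Case 1}, not from the outer kernel alone. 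Your final paragraph correctly supersedes the shortcut by insisting that every choice of high/low factor in the inner term be tracked through the double summation, so the plan is sound; just be aware that the proof cannot be organized as ``outer kernel times a single inner product norm'' but must be the genuine three-case trilinear computation the paper displays.
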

\begin{proof}
	It suffices to prove (1) as an example. We obtain the following from $\eqref{crucial}$:
	\begin{equation}
		\begin{aligned}
			\left\|\Omega (M,Nu) \right\|_{B^s_{4/3}}
			&\lesssim \left\|2^{k(s-2)}\left\|P_{k}M\right\|_{L^{p_1}} \sum_{ k_1 \leq k-K\atop k > \beta}\left\|P_{k_1}(Nu)\right\|_{L^{\sigma}}\right\|_{\ell_{k}^{2}}.
		\end{aligned}
	\end{equation}
	Furthermore, we obtain three cases:
	\begin{align*}
		\left\|P_{k_1}(Nu)\right\|_{L^{\sigma}} \lesssim&\sum_{k_2=k_1-2}^{k_1+2} \left\|P_{k_2}u\right\|_{L^{p_2}} \sum_{k_3 \leq k_2-K}\left\|P_{k_3}N\right\|_{L^{p_3}} &\textbf{Case1}\\
		&+\sum_{k_2=k_1-2}^{k_1+2} \left\|P_{k_2}u\right\|_{L^{p_2}} \sum_{|k_3-k_2| \leq K-1}\left\|P_{k_3}N\right\|_{L^{p_3}} &\textbf{Case2}\\
		&+\sum_{k_3=k_1-2}^{k_1+2} \left\|P_{k_3}N\right\|_{L^{p_3}} \sum_{k_2 \leq k_3-K}\left\|P_{k_2}u\right\|_{L^{p_2}}, &\textbf{Case3}
	\end{align*}
where
\begin{equation}\label{crucial condition2}
	\left\{\begin{array}{l}
		k, k_{i}\ \in \mathbb{Z}, \\
	    p_{i}, q_{i} \in[1, \infty],\\
		3 / 4 =1 / p_{1}+1 / \sigma, 1 / \sigma=1 / p_{2}+1 / p_{3},\\
		p_{i} \geq q_{i}.
	\end{array}\right.
\end{equation}
		
	$ \textbf{Case 1:}$ For $k_3 \lesssim k_2 \sim k_1 \lesssim k $, concerning \eqref{crucial}, we have 
	\begin{equation}\label{case11}
		\begin{aligned}
			&2^{ks}\left\|P_k\Omega(M, Nu) \right\|_{L^p(\text{case 1})}\\
			&\lesssim 2^{k^{+}\left(s-2+\frac{3}{q_1}-\frac{3}{p_1}-l\right)}
			\sum_{k_1 \lesssim k}  2^{k_1\left(\frac{3}{q_2}-\frac{3}{p_2}\right)-k_{1}^{+}s}
			 \sum_{ k_3 \lesssim k_{1}}2^{k_3\left(\frac{3}{q_3}-\frac{3}{p_3}\right)-k_{3}^{+}l}\\ &\quad\times\left\|P_{k}M\right\|_{B^{l}_{q_1}}\left\|P_{k_1}u\right\|_{B^{s}_{q_2}}\left\|P_{k_3}N\right\|_{B_{q_3}^{l}}.
		\end{aligned}
	\end{equation}
First considering $\left(k_{1}>0, k_{3}>0\right)$, we obtain 
	\begin{equation}\label{case1}
		\begin{aligned}
			\eqref{case11}
			\lesssim 2^{k\left(s-2-l\right)}\sum_{ k_1 \lesssim k} 2^{-k_{1}s}\sum_{k_3 \lesssim k_{1}} 2^{k_{3}\left(3/2-l\right)} \left\|P_{k}M\right\|_{H^{l}}\left\|P_{k_1}u\right\|_{B^{s}_{4}}\left\|P_{k_3}N\right\|_{H^{l}}
		\end{aligned}
	\end{equation}
with $ (p,  \sigma, p_1, p_2, p_3, q_1, q_2, q_3) = \left(4/3,  4, 2, 4, \infty, 2, 4, 2\right)$.
	The summation over $k_3$ and $k_1$ deduces
	\begin{equation}\label{bounded1}
		l<3/2:
		\left\{\begin{array}{ll}
			-s+3/2-l>0\Rightarrow 2^{k\left(-1/2-2l\right)},\\
			-s+3/2-l<0\Rightarrow2^{k\left(-l+s-2\right)},\\
			-s+3/2-l=0\Rightarrow k2^{k\left(-l+s-2\right)},
		\end{array}\right.
	\end{equation}
	\begin{equation}\label{bounded2}
		l>3/2:
		\left\{\begin{array}{ll}
			s<0\Rightarrow 2^{k\left(-l-2\right)},\\
			s>0\Rightarrow2^{k\left(-l+s-2\right)},\\
			s=0\Rightarrow k2^{k\left(-l+s-2\right)},
		\end{array}\right.
	\end{equation}
	and
	\begin{equation}\label{bounded3}
		l=3/2:
		\left\{\begin{array}{ll}
			s<0\Rightarrow k2^{k\left(-l-2\right)},\\
			s>0\Rightarrow 2^{k\left(-l+s-2\right)},\\
			s=0\Rightarrow k2^{k\left(-l+s-2\right)}.
		\end{array}\right.
	\end{equation}
By summation over $k$, we can see  $l<3/2$ is fine if $l \geq -1/4$, $s \leq l+2$ and $(s, l) \neq \left(7/4, -1/4\right)$; $l>3/2$ is fine if $s \leq l+2$; and $l=3/2$ is fine if  $s \leq l+2$. To sum up, the summation of \eqref{case1} is bounded provided that $l \geq -1/4$, $s \leq l+2$ and $(s, l) \neq \left(7/4, -1/4\right)$.

For $\left(k_{1}>0, k_{3}\leq0\right)$ and $\left(k_{1}\leq 0\right)$ meanwhile, it is clear that the boundedness results can be contained by $\left(k_{3}>0, k_{1}>0\right)$. The same discussion is true for $\bf{Case 2}$ and $\bf{Case 3}$, hence we will omit part of proof.

$\textbf{Case 2:}$  For $k_3 \sim k_2 \sim k_1 \lesssim k $. As in \textbf{Case 1}, we choose 
$$ (p,  \sigma, p_1, p_2, p_3, q_1, q_2, q_3)= \left(4/3, 4, 2, 4, \infty, 2, 4, 2\right).$$
Differently however, it suffices to discuss
$2^{k\left(s-2-l\right)}\sum_{ k_1 \lesssim k} 2^{k_{1}\left(-s+3/2-l\right)}$ $\left(k_{1}>0\right)$, which is bounded by
\begin{equation}\label{bounded4}
	\left\{\begin{array}{ll}
		-s+3/2-l>0\Rightarrow 2^{k\left(-1/2-2l\right)},\\
		-s+3/2-l<0\Rightarrow2^{k\left(-l+s-2\right)},\\
		-s+3/2-l=0\Rightarrow k2^{k\left(-l+s-2\right)},
	\end{array}\right.
\end{equation}
provided that $l \geq -1/4$, $s \leq l+2$ and $(s, l) \neq \left(7/4, -1/4\right)$.

$\textbf{Case 3:}$ For $k_2 \lesssim k_3 \sim k_1 \lesssim k $, the boundedness is relevant to
\begin{equation}
	\begin{aligned}
		&2^{ks}\left\|P_k\Omega(M, Nu) \right\|_{L^p(\text{case 3})}\\
		&\lesssim 	2^{k^{+}\left(s-2+\frac{3}{q_1}-\frac{3}{p_1}-l\right)}\sum_{ k_1 \lesssim k} 2^{k_{1}\left(\frac{3}{q_3}-\frac{3}{p_3}\right)-k_{1}^{+}l}\sum_{k_2 \lesssim k_{1}} 2^{k_{2}\left(\frac{3}{q_2}-\frac{3}{p_2}\right)-k_{2}^{+}s}\\
		&\quad \times\left\|P_{k}M\right\|_{B^{l}_{q_1}}\left\|P_{k_1}N\right\|_{B^{l}_{q_3}}\left\|P_{k_2}u\right\|_{B_{q_2}^{s}}.
	\end{aligned}
\end{equation}
By summation over $(k_1>0, k_{2}>0)$, we obtain with $(p, \sigma, p_1, p_2, p_3, q_1, q_2, q_3) = \left(4/3, 4, 2, \infty, 4, 2, 4, 2\right)$ 
\begin{equation}\label{bounded31}
	s<3/4:
	\left\{\begin{array}{ll}
		-s+3/2-l>0\Rightarrow 2^{k\left(-1/2-2l\right)},\\
		-s+3/2-l<0\Rightarrow 2^{k\left(s-2-l\right)},\\
		-s+3/2-l=0\Rightarrow k2^{k\left(s-2-l\right)},
	\end{array}\right.
\end{equation}
\begin{equation}\label{bounded32}
	s>3/4:
	\left\{\begin{array}{ll}
		3/4-l>0\Rightarrow 2^{k\left(-5/4-2l+s\right)},\\
		3/4-l<0\Rightarrow 2^{k\left(	s-2-l\right)},\\
		3/4-l=0\Rightarrow k2^{k\left(	s-2-l\right)},
	\end{array}\right.
\end{equation}
and
\begin{equation}\label{bounded33}
	s=3/4:
	\left\{\begin{array}{ll}
		3/4-l>0\Rightarrow k2^{k\left(-5/4-2l+s\right)},\\
		3/4-l<0\Rightarrow 2^{k\left(s-2-l\right)},\\
		3/4-l=0\Rightarrow k2^{k\left(s-2-l\right)}.
	\end{array}\right.
\end{equation}
By summation over $k$, $s<3/4$ is fine if $l \geq -1/4, s \leq l+2$ and $(s, l) \neq \left(7/4, -1/4\right)$; $s>3/4$ is fine if $s \leq \min\{l+2, 2l+5/4\}$ and $(s, l) \neq \left(11/4, 3/4\right)$; and $s=3/4$ is fine if  $l \geq -1/4$ and $(s, l) \neq \left(3/4, -1/4\right)$.

Ultimately, an application of  H\"{o}lder's inequality for $t$, \textbf{Case 1}, \textbf{Case 2} and \textbf{Case 3} together with $\left\|P_{k} M\right\|_{H^{l}} \in \ell_{k}^{2}$ lead to $\eqref{boundary2}$.

The proofs for (2) and (3) are similarly obtained.

	\end{proof}
\begin{remark}
	
	(1) Concerning $\textbf{Case 1}$ in the proof for \eqref{MNu}, we explain how to choose $p, p_{i}, q_{i}$. The choices, all based on the confined conditions in $\eqref{crucial condition2}$, are to obtain a wider range of $(s, l)$.
	
	Step 1: We first determine $p$. In terms of  \eqref{case11}, it is optimal the differences $\frac{1}{q_1}-\frac{1}{p_1} \geq 0$, $\frac{1}{q_2}-\frac{1}{p_2} \geq 0$ and $\frac{1}{q_3}-\frac{1}{p_3} \geq 0$ are all as small as possible, with 0 being the best. On one hand, $p_{1}$, $p_2$ and $p_{3}$ should be as small as possible, which means $p$ should be small with $\frac{1}{p}=\frac{1}{p_1}+\frac{1}{p_2}+\frac{1}{p_3}$. In addition,  as $p$ comes from the Schr\"{o}dinger Strichartz estimates in Lemma $\ref{strichartz}$, together with the space $\eqref{Xs}$,  it is optimal to choose $p=\frac{6q}{3q+4}$. 
	
	On the other hand, $q_{1}$, $q_{2}$ and $q_{3}$ should all be as large as possible. Thus, in view of  $\eqref{Xs}$ and $\eqref{Yl}$, we observe $q_{1}=2$, $q_{2}=\frac{6q}{3q-4}$ and $q_{3}=2$. 
	
	Step 2: Since $k$ is dominant in the summation of the first dyadic decomposition, we have to choose $p_{1}$ preferentially. The condition that $p_{1}$ should be as small as possible, together with $p_{1} \geq \max \{q_{1}, p\}$ is briefly described as $p_{1} = \max \{q_{1}, p\}$; that is to say $p_{1}=2$. At this time, $\frac{1}{p}=\frac{1}{p_1}+\frac{1}{\sigma}$ derives $\sigma=\frac{3q}{2}$.
	
	Step 3: Similar to the above Step 2, $k_{2}$ is dominant in the summation of the second dyadic decomposition. Hence $p_{2} = \max \{q_{2}, \sigma\}=\frac{6q}{3q-4}$, where $\frac{6q}{3q-4} \geq \frac{3q}{2}$ deduces that the range of  $q$ is $2 < q \leq \frac{8}{3}$. 
	
	Step 4:  Now $p_{3}=\frac{6q}{8-3q}$ by $\frac{1}{\sigma}=\frac{1}{p_2}+\frac{1}{p_3}$, and we have $p_{3} \geq \max\{q_{3}, \sigma\}$.
	
	Step 5: This is a test process. Steps 1 to 4 are just related to the spatial variable in theory. However, considering the spaces $\eqref{Xs}$, $\eqref{Yl}$ and $2<q\leq \frac{8}{3}$, it is imperative to check whether the norms of $u, N, M$ for the time variable $t$ are still reasonable when using H\"{o}lder's inequalities. If not, adjustments must be made for $p, p_{i}, q_{i}$. For instance, the parameter selection for proving \eqref{uvw} is like this, leaving it to the reader.
	
	Thus, we choose $ (p, \sigma, p_1, p_2, p_3, q_1, q_2, q_3) = \left(\frac{6q}{3q+4}, \frac{3q}{2}, 2,  \frac{6q}{3q-4}, \frac{6q}{8-3q}, 2, \frac{6q}{3q-4}, 2\right)$ in \textbf{Case 1}. 
	
	(2) Differently to \textbf{Case 1} and \textbf{Case 2}, $k_{3}$ is dominant in the summation of the second dyadic decomposition in \textbf{Case 3}. That is why we will first choose $p_{3}= \max \{q_{3}, \sigma\}=\frac{3q}{2}$ for $2 < q \leq \frac{8}{3}$ in Step 3. 
	\end{remark}
\subsection{Proof of Theorem $\ref{Theorem2}$}
It is time to prove Theorem $\ref{Theorem2}$ by applying the multilinear estimates above. For any initial data $\left(u_{0}, N_{0}\right) \in H^{s}\left(\mathbb{R}^{3}\right) \times H^{l}\left(\mathbb{R}^{3}\right)$, we define a mapping of $(u, N) \mapsto \Phi_{u_{0}, N_{0}}(u, N)=(F(u,N),G(u,N))$, where $F(u,N)$ and $G(u,N)$ are the right-hand sides of equations $\eqref{u}$ and $\eqref{N}$, respectively. 
Now, the proof proceeds by a standard contraction mapping principle. First, the intersection of ranges for $q$ yields that $q=8/3$ is the optimal choice. In addition, the resolution space is denoted as
\begin{equation}
		X(T) =\left\{(u,N): \left\|(u,N) \right\|_{X(T)}\leq \eta=5/2 \left(\left\|u_0\right\|_{H^s}+\left\|N_0\right\|_{H^l}\right) \right\},
\end{equation} 
where the $X(T)$-norm is given by
$$\left\|(u,N) \right\|_{X(T)} =\left\|u\right\|_{X^s} + \left\|N\right\|_{Y^l}.$$ 
By the previous lemmas, we have 
\begin{equation}
	\begin{aligned}
		&\left\|F(u,N)\right\|_{X^s} \\
		&\lesssim\left\|u_0\right\|_{H^s} +2^{-\beta \theta_{1}}\left\|N_{0}\right\|_{H^l}\left\|u_0\right\|_{H^s} +2^{-\beta (\theta_{1}+\theta_{2})} \left\|N\right\|_{Y^l} \left\|u\right\|_{X^s}\\
		&\quad+ T^{1/4}(C(\beta)+1)\left\|N\right\|_{Y^l} \left\|u\right\|_{X^s} +T^{1/4}\left\|u\right\|^{3}_{X^{s}}+T^{1/4}\left\|N\right\|^2_{Y^l}\left\|u\right\|_{X^s} ,
	\end{aligned}
\end{equation} 
and
\begin{equation}
	\begin{aligned}
		\left\|G(u,N)\right\|_{Y^l}&\lesssim \left\|N_0\right\|_{H^l} +2^{-\beta \theta_{3}}\left\|u_{0}\right\|^2 _{H^s} + 2^{-\beta \theta_{3}}\left\|u\right\|^2_{X^s} \\
		&\quad+T^{3/8}(C(\beta)+1) \left\|u\right\|^2_{X^s}+T^{1/4} \left\|N\right\|_{Y^l} \left\|u\right\|^2_{X^s}.
	\end{aligned}
\end{equation} 
Without loss of generality, we denote $\theta=\min\left\{\theta_{1}, \theta_{3}\right\}$, and  $T^{\delta}=\max \left\{T^{1/4}, T^{3/8}\right\}$. Hence for any $(u, N), (v, M) \in X(T)$ with the same initial data, it follows that
\begin{equation}
	\begin{aligned}
		&\|\Phi_{u_{0}, N_{0}}(u, N)-\Phi_{v_{0}, M_{0}}(v, M)\|_{X(T)}\\
		&\leq \left(2^{-\beta \theta}\eta
		+T^{\delta}(C(\beta)+1)\eta+T^{\delta}C\eta^{2}\right)\left(\|u-v\|_{X^{s}}+\|N-M\|_{Y^{l}}\right).
	\end{aligned}
\end{equation}
We let $\beta=\beta(\eta)$ satisfy $2^{-\beta \theta}\eta \leq 1/4$ and choose an appropriate $T>0$ such that $T^{\delta}(C(\beta)+1)\eta+T^{\delta}C\eta^{2} \leq 1/4$. Hence we conclude that
\begin{equation}
	\|\Phi_{u_{0}, N_{0}}(u, N)-\Phi_{v_{0}, M_{0}}(v, M)\|_{X(T)} \leq 1/2 \left(\|u-v\|_{X^{s}}+\|N-M\|_{Y^{l}}\right).
\end{equation}
In addition,  by denoting $(v,M) = (0,0)$ we get
\begin{equation}
	\begin{aligned}
	\left\|\Phi_{u_{0}, N_{0}}(u, N) \right\|_{X(T)}
	\leq &\eta/2+\left\|u_{0}\right\|_{H^{s}}+\left\|N_{0}\right\|_{H^{l}}+2^{-\beta \theta}\left\|N_{0}\right\|_{H^{l}}\left\|u_{0}\right\|_{H^{s}}\\
	&+2^{-\beta \theta}\left\|u_{0}\right\|^{2}_{H^{s}}\leq \eta.
	\end{aligned}
\end{equation}
Through this, we can see that $\Phi$ is a contraction mapping for any initial data $(u_{0}, N_{0})$. Thus, by the contraction mapping principle , we have a local unique solution in $X^{s} \times Y^{l},$ and the Lipschitz continuity  $H^{s} \times H^{l} \rightarrow X^{s} \times Y^{l}$ follows from the standard argument.

\section{Local well-posedness for $d=2$}
This section is devoted to the case for dimension 2. The spaces here are defined as
\begin{equation}\label{Xs'}
	\begin{aligned}
	u \in X^{s}:= &C \left([-T, T];H^{s}\left(\mathbb{R}^{2}\right)  \right) \cap L^{\infty}\left([-T,T];H^{s}\left(\mathbb{R}^{2}\right) \right)\\ 
	&\cap L^{q}\left([-T,T];B^{s}_{\frac{2q}{q - 2}}\left(\mathbb{R}^{2}\right) \right)
	\end{aligned}
\end{equation}
for $2< q \leq \infty$, and
\begin{equation}\label{Yl'}
	N \in Y^{l}:= C \left([-T,T];H^{l}\left(\mathbb{R}^{2}\right) \right) \cap L^{\infty}\left([-T,T];H^{l}\left(\mathbb{R}^{2}\right) \right).
\end{equation}
We ultimately determine $q=4$ for $d=2$. Since the proofs are essentially the same as in the case for dimension 3, we only present the main lemmas.
\begin{lemma} (Quadratic terms)\label{Quadratic terms d=2} 
	
	(1) If $s, l \geq 0$, then for any $N(x)$ and $u(x)$,
	\begin{equation}\label{Quadratic 1d=2}
		\left\|(N u)_{L H+HH}\right\|_{L_{t}^{4/3}B_{4/3}^{s}} \lesssim T^{1/2}\|N\|_{L_{t}^{\infty}H^{l}}\|u\|_{L_{t}^{4}B_{4}^{s}}.
	\end{equation}
	\begin{equation}\label{Quadratic 11d=2}
		\left\|(N u)_{\alpha L}\right\|_{L_{t}^{4/3}B_{4/3}^{s}} \lesssim T^{1/2}C(\beta)\|N\|_{L_{t}^{\infty}H^{l}}\|u\|_{L_{t}^{4}B_{4}^{s}}.
	\end{equation}
	
	(2) If $2s \geq l+1$, then for any $u(x)$ and $v(x)$,
	\begin{equation}\label{Quadratic 2d=2}
		\left\|D(u v)_{H H }\right\|_{L_{t}^{8/7}B^l_{4/3}} \lesssim T^{5/8}\|u\|_{L^{\infty}_{t}H^{s}}\|v\|_{L^{4}_{t}B_{4}^{s}}.
	\end{equation}
	\begin{equation}\label{Quadratic 22d=2}
		\left\|D(u v)_{\alpha L+L \alpha }\right\|_{L_{t}^{8/7}B^l_{4/3}} \lesssim T^{5/8}C(\beta)\|u\|_{L^{\infty}_{t}H^{s}}\|v\|_{L^{4}_{t}B_{4}^{s}}.
	\end{equation}
\end{lemma}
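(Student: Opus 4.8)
The plan is to mirror the $d=3$ argument from Lemma~\ref{Quadratic terms}, replacing the $3$-dimensional Sobolev and Strichartz exponents by their $2$-dimensional counterparts. For $d=2$ the Schr\"odinger-admissible pair attached to the space $X^s$ in \eqref{Xs'} with $q=4$ is $(4,4)$, so the relevant inhomogeneous Strichartz estimate \eqref{s-strichartz} for the Schr\"odinger flow consumes a source term in $L_t^{4/3}B^s_{4/3}$, and for the wave flow the admissible pair $(8/5,4)$ (dual exponent $(8/3,4/3)$) explains the $L_t^{8/7}B^l_{4/3}$ norm appearing in \eqref{Quadratic 2d=2}. First I would fix $k\in\mathbb Z$ and estimate $\|P_k(Nu)_{LH}\|_{L^{4/3}}$: since $(Nu)_{LH}=\sum_j P_{\le j-K}N\,P_j u$, a support analysis forces $j\in\{k-2,\dots,k+2\}$, and H\"older in space ($\tfrac14=\tfrac12+\tfrac14$ now) gives $\|P_k(Nu)_{LH}\|_{L^{4/3}}\lesssim\sum_{j=k-2}^{k+2}\|P_{\le j-K}N\|_{L^2}\|P_j u\|_{L^4}\lesssim\sum_{j=k-2}^{k+2}\|N\|_{L^2}\|P_j u\|_{L^4}$; then one sums $2^{ks}(\cdot)$ in $\ell^2_k$, shifting the index, to get $\|(Nu)_{LH}\|_{B^s_{4/3}}\lesssim\|N\|_{L^2}\|u\|_{B^s_4}\lesssim\|N\|_{H^l}\|u\|_{B^s_4}$ using $H^l(\mathbb R^2)\subset L^2(\mathbb R^2)$ for $l\ge0$. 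The $T^{1/2}$ factor comes from H\"older in time: $L^{4/3}_t\hookrightarrow$ from $L^\infty_t\cdot L^4_t$ on $[-T,T]$ gives $T^{1-1/4-0}=T^{3/4}$? — here one must be careful: $\tfrac{1}{4/3}=\tfrac34$, and $\tfrac34=\tfrac1\infty+\tfrac14+\tfrac1p$ with $\tfrac1p=\tfrac12$, i.e.\ $T^{1/2}$, which is exactly the claimed power.

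Next I would treat $(Nu)_{HH}$ in the same way — the only change from $(Nu)_{LH}$ is that both frequencies are comparable and one sums over $|k_1-k_2|\le K-1$, which costs a harmless constant $C(K)$ — and then $(Nu)_{\alpha L}=\sum_{k\le\beta}P_kN\,P_{\le k-K}u$, where the output frequency is $k\sim j\lesssim\beta$; here the low-frequency factor $P_{\le k-K}u$ is put in $L^4$ via Bernstein and the bound $2^{2\beta s}$ records the loss of $s$ derivatives at frequency $\lesssim 2^\beta$, yielding \eqref{Quadratic 11d=2} with $C(\beta)=2^{2\beta s}$. For part (2), the roles of Schr\"odinger and wave are swapped: one bounds $\|D(uv)_{HH}\|_{B^l_{4/3}}$ by putting $u$ in $L^\infty_t H^s$ and $v$ in $L^4_t B^s_4$. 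The derivative $D$ lands at the (high, comparable) output frequency $2^k$, so $\|P_k D(uv)_{HH}\|_{L^{4/3}}\lesssim 2^k\sum_{|k_1-k_2|\le K-1}\|P_{k_1}u\|_{L^4}\|P_{k_2}v\|_{L^4}$; writing $2^{kl}\cdot2^k=2^{k(l+1)}$ and distributing $l+1=s+(l+1-s)\le s+s=2s$ (this is exactly where $2s\ge l+1$ enters, allowing $2^{kl+k}\lesssim 2^{k_1 s}2^{k_2 s}$ after using comparability of the frequencies), one sums in $\ell^2_k$ by Cauchy--Schwarz. The time factor $T^{5/8}$ arises from $\tfrac{1}{8/7}=\tfrac78=\tfrac1\infty+\tfrac14+\tfrac1p$, so $\tfrac1p=\tfrac58$, i.e.\ $T^{5/8}$; and $(uv)_{\alpha L+L\alpha}$ again contributes the extra $C(\beta)=2^{2\beta s}$.

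The only genuine subtlety — the point I expect to be the main obstacle — is the bookkeeping of exponents: one has to verify that the pairs $(4,4)$ (Schr\"odinger) and $(8/5,4)$ (wave) are indeed admissible in $d=2$ (for Schr\"odinger $\tfrac1q=\tfrac{d}{2}(\tfrac12-\tfrac1r)$ gives $\tfrac14=\tfrac12\cdot\tfrac14$, fine; for wave $\tfrac1{\tilde q}=\tfrac{d-1}{2}(\tfrac12-\tfrac1{\tilde r})$ gives $\tfrac18=\tfrac12\cdot\tfrac14$, fine), that the dual Lebesgue exponents line up in \eqref{s-strichartz}, and that the Sobolev embedding $B^0_{4/3}\cdot$-type H\"older products close with $\tfrac14=\tfrac12+\tfrac14$ rather than the $d=3$ split $\tfrac14=\tfrac14+\tfrac1\infty$ used in \eqref{crucial}. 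In particular one should double-check that in dimension $2$ the endpoint cases of the $\ell^2_k$ summations do not introduce a logarithmic loss at the boundary lines $s=0$ or $2s=l+1$; since all the summations here are geometric (no $k_1^+$-weighted tails of the delicate $\Omega,\tilde\Omega$ type appear in the purely quadratic estimates), this is harmless, and the proof closes exactly as in the $d=3$ case. Thus the statement follows, and one records the same remark as before: the norm of $u$ can only be taken in $L^4_t B^s_4$, not in $L^\infty_t H^s$, because of the admissibility constraint.
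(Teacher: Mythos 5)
Your argument is exactly the paper's: the authors prove only the $d=3$ version (Lemma \ref{Quadratic terms}) and state that the $d=2$ case follows by the same H\"older/Littlewood--Paley scheme with the $2$-dimensional Strichartz exponents, which is precisely what you carry out, including the correct time powers $T^{1/2}$ and $T^{5/8}$ and the $2^{2\beta s}$ loss for the $\alpha L$ piece. The only blemishes are cosmetic mislabelings of the admissible pairs (the wave pair dual to $L_t^{8/7}B^{l}_{4/3}$ is $(\tilde q,\tilde r)=(8,4)$, not $(8/5,4)$, and for Schr\"odinger in $d=2$ one has $\tfrac14=1\cdot\tfrac14$ since $d/2=1$), which your own later verification $\tfrac18=\tfrac12\cdot\tfrac14$ already corrects.
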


\begin{lemma} (Boundary terms)\label{Boundary terms d=2} For any $\theta_{i}(s, l) \geq 0$, and functions $N(x), u(x),  v(x)$, we have the following:
	
	(1) If $l \geq -1$, $s \leq l+2$ and $(s, l) \neq \left(1, -1\right)$, then
	\begin{equation}\label{boundary1d=2}
		\|\Omega(N, u)\|_{L_{t}^{\infty}H^{s}} \lesssim 2^{- \beta \theta_{1}}\|N\|_{L_{t}^{\infty}H^{l}}\|u\|_{L_{t}^{\infty}H^{s}}.
	\end{equation}
	
	(2) If $l \geq -1$, $s \leq l+3/2$ and $(s, l) \neq \left(1/2, -1\right)$, then
	\begin{equation}\label{boundary2d=2}
		\|\Omega(N, u)\|_{L_{t}^{4}B^{s}_{4}} \lesssim 2^{- \beta \theta_{2} }\|N\|_{L_{t}^{\infty}H^{l}}\|u\|_{L_{t}^{4}B^{s}_{4}}.
	\end{equation}
	
	(3) If $s \geq \text{max}\left(l-1, l/2\right)$ and $(s, l)\neq (1, 2)$, then
	\begin{equation}\label{boundary3d=2}
		\|D \tilde{\Omega}(u, v)\|_{L_{t}^{\infty}H^{l}} \lesssim 2^{- \beta \theta_{3}}\|u\|_{L_{t}^{\infty}H^{s}}\|v\|_{L_{t}^{\infty}H^{s}}.
	\end{equation}
\end{lemma}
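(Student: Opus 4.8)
\textbf{Proof proposal for Lemma \ref{Boundary terms d=2}.}

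The plan is to mirror the three-dimensional argument of Lemma \ref{Boundary terms} exactly, replacing the dimension $d=3$ by $d=2$ in the crucial estimate \eqref{crucial} and in the Sobolev/Bernstein exponents, and re-tuning the Strichartz exponent $q$ to the two-dimensional admissible value $q=4$ (so the resolution space carries $L^4_t B^s_4(\mathbb{R}^2)$). Concretely, the two-dimensional analogue of \eqref{crucial}, obtained from the same Coifman--Meyer bilinear multiplier bound together with Bernstein, reads
\begin{equation*}
	\left\|P_{k}\langle D\rangle D\,\Omega(N,u)\right\|_{L^p}
	\lesssim \sum_{k_0=k-2}^{k+2} 2^{k_0^{+}\left(\frac{2}{q_1}-\frac{2}{p_1}-l\right)}\left\|P_{k_0}N\right\|_{B^l_{q_1}}
	\sum_{k_1 \leq k_0-K \atop k_0 > \beta} 2^{k_1\left(\frac{2}{q_2}-\frac{2}{p_2}\right)-k_1^{+}s}\left\|P_{k_1}u\right\|_{B^s_{q_2}},
\end{equation*}
subject to the same constraints \eqref{crucial condition} (with $3$ replaced by $2$ in the Sobolev exponents). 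As in the remark following Lemma \ref{Boundary terms}, the parameter choices are forced by requiring that $N$ keep an $L^\infty_t H^l$ norm and $u$ keep the Strichartz norm: for \eqref{boundary1d=2} and \eqref{boundary3d=2} one takes $(p,p_1,p_2,q_1,q_2)=(2,2,\infty,2,2)$, and for \eqref{boundary2d=2} one takes $(p,p_1,p_2,q_1,q_2)=(4,4,\infty,2,4)$, since $q_1=2$, $q_2=4$ are dictated by \eqref{Xs'}--\eqref{Yl'}, then $p_1=\max\{p,q_1\}=4$ and $p_2=\infty$.

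The heart of the matter, exactly as in three dimensions, is to show that the resulting double dyadic sum is bounded by a negative power of $2^\beta$. Take \eqref{boundary2d=2} as the model case: with the choice above one is reduced to bounding
\begin{equation*}
	2^{k^{+}\left(s-l-3/2\right)}\sum_{k_1 \leq k-K \atop k>\beta} 2^{k_1-k_1^{+}s},
\end{equation*}
where the exponent $2^{k^{+}(s-l-3/2)}$ comes from $\frac{2}{2}-\frac{2}{4}-l = \frac12 - l$ applied after the $\langle D\rangle D$ and the $B^s_4$-weight bookkeeping, and the inner exponent $2^{k_1}$ comes from $\frac{2}{4}-\frac{2}{\infty}=\frac12$ doubled. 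The inner sum over $k_1 \leq k-K$ is $O(1)$ when $k_1\leq 0$ throughout, and for $0<k_1\leq k-K$ it is $O(1)$ if $s>1$, $O(k)$ if $s=1$, and $O(2^{k(1-s)})$ if $s<1$ — this is the source of the excluded corner $(s,l)=(1/2,-1)$, which is the two-dimensional shadow of $(3/4,-1/2)$. Summing the remaining geometric series in $k>\beta$ then yields the factor $2^{-\beta\theta_2(s,l)}$ with $\theta_2\geq 0$ precisely when $l\geq -1$ and $s\leq l+3/2$ (with equality forbidden at the corner), and $\|P_kN\|_{H^l}\in\ell^2_k$ together with H\"older in $t$ (using $T<\infty$ only implicitly, since the powers here are in $2^\beta$) gives \eqref{boundary2d=2}. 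The estimates \eqref{boundary1d=2} and \eqref{boundary3d=2} follow by the identical scheme with the exponents $(2,2,\infty,2,2)$: for \eqref{boundary1d=2} the weight becomes $2^{k^+(s-l-2)}$ against an inner sum $\sum 2^{k_1(1-s)\cdot?}$ forcing $l\geq -1$, $s\leq l+2$, $(s,l)\neq(1,-1)$; for \eqref{boundary3d=2} one applies the analogue of \eqref{crucial} to $\tilde\Omega$, in which the roles of the $H^s$ and $H^l$ factors and the denominator $|\xi-\eta|^2-|\eta|^2-\alpha|\xi|$ are swapped, producing the symmetric conditions $s\geq\max\{l-1,l/2\}$ and $(s,l)\neq(1,2)$.

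The main obstacle is not any single inequality — each step is an elementary Littlewood--Paley/Strichartz computation — but the bookkeeping of the corner cases: one must verify that on the boundary lines $l=-1$, $s=l+3/2$ (resp.\ $s=l+2$, $s=\max\{l/2,l-1\}$) the logarithmically divergent value $s=1$ (resp.\ $s=1/2$) of the inner-sum threshold does \emph{not} collide with a vanishing outer exponent, which is exactly where the single excluded point in each statement comes from, and to confirm that the sub-cases $k_1\leq 0$ and $k_1^+>0$ are genuinely dominated by the principal sub-case just as in the three-dimensional proof. I would organize the write-up by proving \eqref{boundary2d=2} in full (as the excerpt does in 3D) and then stating that \eqref{boundary1d=2} and \eqref{boundary3d=2} follow with the parameter choice $(2,2,\infty,2,2)$, pointing to the sign analysis of the outer exponent for the precise constraints.
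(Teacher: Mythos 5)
Your overall strategy is exactly the paper's: the paper itself only asserts that the $d=2$ proofs are "essentially the same" as the $d=3$ ones, and your dimensional translation of \eqref{crucial}, your parameter choices $(p,p_1,p_2,q_1,q_2)=(4,4,\infty,2,4)$ for \eqref{boundary2d=2} and $(2,2,\infty,2,2)$ for \eqref{boundary1d=2} and \eqref{boundary3d=2}, and your outer exponent $2^{k^{+}(s-l-3/2)}$ are all the intended ones.

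However, there is a concrete arithmetic slip in your model computation for \eqref{boundary2d=2}. The inner Bernstein factor is $2^{k_1\left(\frac{2}{q_2}-\frac{2}{p_2}\right)}=2^{k_1\left(\frac{2}{4}-\frac{2}{\infty}\right)}=2^{k_1/2}$; the quantity $\frac{2}{q_2}-\frac{2}{p_2}$ already carries the dimension $d=2$, so it is not "doubled" to $2^{k_1}$. Consequently the inner sum is $\sum_{k_1}2^{k_1/2-k_1^{+}s}$ and the case split is at $s=1/2$ (namely $O(1)$ for $s>1/2$, $O(k)$ for $s=1/2$, $O(2^{k(1/2-s)})$ for $s<1/2$), not at $s=1$ as you wrote. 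This matters: with your exponent $2^{k_1}$ the sub-case $s<1$ would give a combined exponent $-l-1/2$ and hence the wrong constraint $l\geq-1/2$ with excluded point $(1,-1/2)$, contradicting the conditions $l\geq-1$, $(s,l)\neq(1/2,-1)$ that you (correctly) assert at the end. With the corrected factor $2^{k_1/2}$, the sub-case $s<1/2$ gives combined exponent $(s-l-3/2)+(1/2-s)=-l-1$, the sub-case $s>1/2$ requires $s\leq l+3/2$, and the logarithmic case $s=1/2$ forces strict inequality, excluding exactly $(1/2,-1)$ — consistent with your own observation that this point is the $d=2$ shadow of $(3/4,-1/2)$, since $3/4=\frac{d}{4}$ becomes $1/2$ when $d=2$. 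The analogous check for \eqref{boundary1d=2} (where $p_2=\infty$, $q_2=2$ give inner exponent $2^{k_1}$ and threshold $s=1$, correctly yielding $(1,-1)$) and for \eqref{boundary3d=2} goes through as you describe.
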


\begin{lemma} (Cubic terms)\label{Cubic terms d=2}  For any $M(x), N(x), u(x), v(x)$, we have the following:
	
	(1) If  $l \geq -1/2, s \leq \min\{l+2, 2l+3/2\}$ and $ (s, l) \neq \left(3/2, -1/2 \right), \left(5/2, 1/2\right),\\ \left(1/2, -1/2\right)$, then
	\begin{equation}\label{MNu2}
		\|\Omega(M, N u)\|_{L^{4/3}_{t}B_{4/3}^{s}} \lesssim T^{1/2}\|M\|_{L^{\infty}_{t}H^{l}}\|N\|_{L^{\infty}_{t}H^{l}}\|u\|_{L^{4}_{t}B_{4}^{s}}.
	\end{equation}
	
	(2) If $s \geq -1/4$, then
	\begin{equation}
		\|\Omega(D(u v), w)\|_{L^{4/3}_{t}B_{4/3}^{s}} \lesssim T^{1/4} \|u\|_{{L_{t}^{4}}B_{4}^{s}}\|v\|_{{L_{t}^{4}}B_{4}^{s}}\|w\|_{L_{t}^{\infty}H^{s}}.
	\end{equation}
	
	(3) If $s \geq 0$, $l \leq \min\{s+1, 2s+1/2\}$ and $(s ,l) \neq \left(1/2, 3/2\right), \left(0, 1/2\right)$, 
	then 
	\begin{equation}
		\|D \tilde{\Omega}(N u, v)\|_{L^{1}_{t}H^{l}}+\|D \tilde{\Omega}(v, N u)\|_{L^{1}_{t}H^{l}} \lesssim T^{1/2}\|N\|_{L^{\infty}_{t}H^{l}}\|u\|_{L^{4}_{t}B_{4}^{s}}\|v\|_{L^{4}_{t}B_{4}^{s}}.
	\end{equation}
\end{lemma}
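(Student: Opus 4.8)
The plan is to repeat, essentially line for line, the argument used for the three–dimensional cubic estimate in Lemma~\ref{Cubic terms}, making only the substitutions forced by the change of dimension: the Bernstein exponent $3$ in \eqref{crucial} and \eqref{crucial condition2} is replaced by $2$, and the three–dimensional Strichartz bookkeeping ($q=8/3$, the Schr\"odinger pair $(8/3,4)$, dual spatial exponent $4/3$) is replaced by its two–dimensional counterpart ($q=4$, the Schr\"odinger–admissible pair $(4,4)$, again dual spatial exponent $4/3$, and dual wave space $L^{8/7}_tB^l_{4/3}$ as already used in \eqref{Quadratic 2d=2}). Concretely, for part~(1) I would begin from the $d=2$ analogue of \eqref{crucial} applied to the $XL$–type multiplier $\Omega$; this forces $k_0\sim k>\beta$ and $k_1\le k_0-K$ and produces the outer factor $2^{k^{+}(s-2-l)}$ times $\sum_{k_1\le k-K}2^{k_1(2/q_2-2/p_2)-k_1^{+}s}\|P_{k_1}(Nu)\|_{L^\sigma}$. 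Then I would Littlewood--Paley–decompose the inner product $Nu$ exactly as in the excerpt into \textbf{Case 1} ($k_3\lesssim k_2\sim k_1$, $N$ much lower than $u$), \textbf{Case 2} ($k_3\sim k_2\sim k_1$) and \textbf{Case 3} ($k_2\lesssim k_3\sim k_1$, $u$ much lower than $N$), applying \eqref{crucial} (or a bare H\"older together with Bernstein) once more to the inner bilinear term.

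The Lebesgue/Besov exponents $(p,\sigma,p_1,p_2,p_3,q_1,q_2,q_3)$ are then fixed by the recipe of the Remark after Lemma~\ref{Cubic terms}, now with $d=2$ and $q=4$: take $p=4/3$ (dual Schr\"odinger Strichartz of Lemma~\ref{strichartz} in the space \eqref{Xs'}), $q_1=q_3=2$ so that $M,N$ sit in $H^l=B^l_2$ with an $L^\infty_t$ norm, and $q_2=4$ so that $u$ sits in $B^s_4$ with an $L^4_t$ norm; this gives $p_1=\max\{p,q_1\}=2$, $\sigma=4$ from $1/p=1/p_1+1/\sigma$, and $p_2=\max\{q_2,\sigma\}=4$, $p_3=\infty$ in Cases~1--2 (resp.\ $p_3=\max\{q_3,\sigma\}=4$, $p_2=\infty$ in Case~3). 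With these choices the inner Bernstein gain on the low factor is $2^{k_3(1-l)}$ in Cases~1--2 and $2^{k_1(1/2-l)}2^{k_2(1/2-s)}$ in Case~3 (the two–dimensional replacements of the $3/2$ and $3/4$ appearing in \eqref{bounded1} and \eqref{bounded31}); I then evaluate the two nested geometric sums innermost–first (over $k_2$ or $k_3$, then $k_1$, then $k$), splitting each by the sign of its exponent just as in \eqref{bounded1}--\eqref{bounded3} and \eqref{bounded31}--\eqref{bounded33}. The surviving conditions are $l\ge-1/2$, $s\le\min\{l+2,2l+3/2\}$ with the logarithmically divergent points $(3/2,-1/2),(5/2,1/2),(1/2,-1/2)$ excluded, i.e.\ exactly the hypotheses of \eqref{MNu2}; a final H\"older in time supplies $T^{1/2}$ and the norms $L^\infty_tH^l$ on $M,N$ and $L^4_tB^s_4$ on $u$, while $\|P_kM\|_{H^l},\|P_kN\|_{H^l},\|P_ku\|_{B^s_4}\in\ell^2_k$ close the Besov $\ell^2_k$ sum.

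Parts~(2) and~(3) follow the same scheme with minor rearrangements. For~(2) the outer operator is again $\Omega$, but the low slot is $w\in L^\infty_tH^s$ and the high slot is $D(uv)$ with $u,v\in L^4_tB^s_4$; as the Remark already warns, the ``test'' step then dictates a different split of $(p,p_i,q_i)$ than in Case~1, and since $D(uv)$ is the high factor one of $u,v$ carries frequency $\gtrsim 2^{k_0}$, so the summation collapses to the single constraint $s\ge-1/4$. For~(3) the operator is $\tilde\Omega$, which is of $XL+LX$ type, so \emph{either} of $Nu$ and $v$ may be the high factor; decomposing $Nu$ further into $N\in H^l$ and $u\in B^s_4$ one obtains, by the $s\leftrightarrow l$ mirror symmetry already visible between parts~(1) and~(3) of Lemma~\ref{Cubic terms}, the constraints $s\ge0$, $l\le\min\{s+1,2s+1/2\}$ and excluded points $(1/2,3/2),(0,1/2)$, with $L^2_tH^l\hookrightarrow L^1_tH^l$ over $[-T,T]$ yielding the $T^{1/2}$. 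I expect the main obstacle to be purely organizational rather than analytic: keeping the double dyadic summation and all of its sign cases straight \emph{and} carrying through the ``test process'' that reconciles the spatial exponents (chosen on analytic grounds) with the temporal integrability imposed by \eqref{Xs'} and \eqref{Yl'} at the endpoint $q=4$; a misstep there only shrinks the admissible region and is precisely what forces the exact endpoints recorded in \eqref{result1}.
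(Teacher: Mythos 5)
Your proposal is correct and coincides with the paper's own treatment: the paper gives no separate proof of Lemma~\ref{Cubic terms d=2}, stating only that the argument is the same as in $d=3$, which is precisely the dimensional transcription you carry out. Your bookkeeping (Bernstein exponent $3\to2$, the pair $(8/3,4)\to(4,4)$, the gains $3/2\to1$ and $3/4\to1/2$ in the dyadic sums, and the resulting thresholds and excluded points $(3/2,-1/2)$, $(5/2,1/2)$, $(1/2,-1/2)$ together with the H\"older-in-time powers $T^{1/2}$, $T^{1/4}$) checks out against the $d=3$ computations in \eqref{bounded1}--\eqref{bounded33}.
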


\section*{Acknowledgments} We would like to thank Prof. Zihua Guo for the numerous discussions and encouragement. We are also grateful to the anonymous referees for the helpful comments which greatly improve the presentation of this paper. The second author is partially supported by the Chinese Scholarship Council (No. 201906050022).

\end{document}